\def\floor#1{\left\lfloor#1\right\rfloor}
\def\ceil#1{\left\lceil#1\right\rceil}
\def\mad{{\rm{mad}}}
\def\lc{{\rm{lc}}}
\def\lcl{{\rm{lc_{\ell}}}}
\newtheorem{theorem}{Theorem}
\newtheorem*{theoremA}{Theorem A}
\newtheorem*{theoremB}{Theorem B}
\newtheorem{lemma}{Lemma}
\newtheorem{cor}{Corollary}
\newtheorem{observation}{Observation}
\begin{document}
\author{Daniel W. Cranston\thanks{Department of Mathematics and Applied Mathematics, Virginia Commonwealth University, Richmond, VA, 23284. email: \texttt{dcranston@vcu.edu}} \and 
Gexin Yu\thanks{Department of Mathematics, College of William \& Mary, Williamsburg, VA, 23185. email: \texttt{gyu@wm.edu}.
 Research supported in part by the NSF grant DMS-0852452.}}

\title{Linear Choosability of Sparse Graphs}
\maketitle

\abstract{
We study the linear list chromatic number, denoted $\lcl(G)$, of sparse graphs.
The maximum average degree of a graph $G$, denoted $mad(G)$,  is the maximum of the average degrees of all subgraphs of $G$. %is defined by $\mad(G)=\max_{H\subseteq G}\frac{2|E(H)|}{|V(H)|}$.
It is clear that any graph $G$ with maximum degree $\Delta(G)$ satisfies $\lcl(G)\ge \lceil\Delta(G)/2\rceil+1$.  In this paper, we prove the following results:  (1) if $\mad(G)<12/5$ and $\Delta(G)\ge 3$, then $\lcl(G)=\lceil\Delta(G)/2\rceil+1$,  and we give an infinite family of examples to show that this result is best possible;  
(2) if $\mad(G)<3$ and $\Delta(G)\ge 9$, then $\lcl(G)\le\lceil \Delta(G)/2\rceil+2$, and  we give an infinite family of examples to show that the bound on $\mad(G)$ cannot be increased in general;  
(3) if $G$ is planar and has girth at least 5, then $\lcl(G)\le\lceil\Delta(G)/2\rceil+4$.
}

\section{Introduction}

In 1973, Gr\"{u}nbaum introduced {\it acyclic colorings}~\cite{G}, which are proper colorings with the additional property that each pair of color classes induces a forest.  
%He showed that every planar graph has acyclic chromatic number at most 9, and conjectured that this upper bound could be reduced to 5.  A series of papers in the 1970s~\cite{M,AB,K,B} culminated with Borodin proving this conjecture~\cite{B}: Every planar graph has acyclic chromatic number at most 5.
In 1997, Hind, Molloy, and Reed introduced frugal colorings~\cite{HMR}.  A proper coloring is {\it $k$-frugal} if the subgraph induced by each pair of color classes has maximum degree less than $k$.  Yuster~\cite{Y} combined the ideas of acyclic coloring and $3$-frugal coloring in the notion of a {\it linear coloring}, which is a proper coloring such that each pair of color classes induces a union of disjoint paths---also called a {\it linear forest}.
We write $\lc(G)$ to denote the {\it linear chromatic number} of $G$, which is the smallest integer $k$ such that $G$ has a proper $k$-coloring in which every pair of color classes induces a linear forest. 

We begin by noting easy upper and lower bounds on $\lc(G)$.  If $G$ is a graph with maximum degree $\Delta(G)$, then we have the naive lower bound $\lc(G)\ge\ceil{\Delta(G)/2}+1$, since each color can appear on at most two neighbors of a vertex of maximum degree.  %For an upper bound, we consider $G^2$, which is the graph with $V(G^2)=V(G)$ and with $uv\in E(G^2)$ if and only if $\dist_G(u,v)\le 2$.
Observe that $\lc(G)\le \chi(G^2)\le\Delta(G^2)+1\le \Delta(G)^2+1$, where $\chi(G)$ denotes the chromatic number of $G$ and $G^2$ is the square graph of $G$. 
%It is easy to find graphs where this lower bound holds with equality.  For example, this is true for all trees.  
%This lower bound reminds us of the edge chromatic number, denoted $\chi'(G)$, for which we have the obvious lower bound $\chi'(G)\ge\Delta(G)$, since that bound also holds with equality for trees.
%In the case of edge coloring, Vizing~\cite{V2} proved that $\Delta(G)\le \chi'(G)\le \Delta(G)+1$.  So it's natural to ask whether a similar result holds for linear coloring.  
%Although it is also easy to find graphs for which this lower bound does not hold with equality, it is natural to ask the following related question.
%Does there exist a constant $C$ such that $\lc(G)\le\ceil{\Delta(G)/2}+C$?  Yuster answer this question negatively~\cite{Y}, 
Yuster~\cite{Y} constructed an infinite family of graphs such that $\lc(G)\ge C_1\Delta(G)^{3/2}$, for some constant $C_1$.  He also proved an upper bound of $\lc(G)\le C_2\Delta(G)^{3/2}$, for some constant $C_2$ and for sufficiently large $\Delta(G)$.
%Subsequent work on linear coloring~\cite{EMR,RW} has mainly studied graphs with $\lc(G)\le\ceil{\Delta(G)/2}+C$, for some constant $C$.  We will return to this problem shortly.

Note that trees with maximum degree $\Delta(G)$ have linear chromatic number $\ceil{\Delta(G)/2}+1$, i.e., the naive lower bound holds with equality (for example, we can color greedily in order of a breadth-first search from an arbitrary vertex).  This equality for trees suggests that sparse graphs might have linear chromatic number close to the naive lower bound.    To be more precise: Is it true that sparse graphs have $\lc(G)\le\ceil{\Delta(G)/2}+C$, for some constant $C$?
To state the previous results related to this question, we first introduce some more notation.

% In the 1970s, Erd\"{o}s, Rubin, and Taylor~\cite{ERT} and, independently, Vizing~\cite{V} introduced list coloring.  A {\it list assignment} $L$ assigns to each vertex $v$ a list (set) of colors $L(v)$. An {\it $L$-coloring} is a proper vertex coloring $c$ such that each vertex $v$ receives a color $c(v)$ from its list $L(v)$.  The {\it list chromatic number} $\chi_{\ell}(G)$ of a graph $G$ is the minimum integer $k$ such that $G$ has an $L$-coloring if $|L(v)|\ge k$ for all $v\in V(G)$.

We start with linear list colorings, which are linear colorings from assigned lists.
Formally, let $\lcl(G)$ be the {\it linear list chromatic number} of $G$, that is, the smallest integer $k$ such that if each vertex $v\in V(G)$ is given a list $L(v)$ with $|L(v)|\ge k$, then $G$ has a linear coloring such that each vertex $v$ gets a color $c(v)$ from its list $L(v)$.  When all the lists are the same, linear list coloring is the same as linear coloring.   General list coloring was first introduced by Erd\"{o}s, Rubin, and Taylor~\cite{ERT} and independently by Vizing~\cite{V} in the 1970s, and it has been well-explored since then~\cite{JT}.     

Linear list colorings were first studied by Esperet, Montassier, and Raspaud~\cite{EMR}.  The {\it maximum average degree} of a graph $G$, denoted $\mad(G)$, is the maximum of the average degrees of all of its subgraphs, i.e., $\mad(G)=\max_{H\subseteq G}\frac{2|E(H)|}{|V(H)|}$.  
Observe that the family of all trees is precisely the set of connected graphs with $\mad(G)<2$ (so indeed we are generalizing our motivating example, trees). 
The following results were shown in~\cite{EMR}:

\begin{theoremA}[\cite{EMR}] Let $G$ be a graph:\\
(1)~If $\mad(G)<8/3$, then $\lcl(G)\le\ceil{\Delta(G)/2}+3$.\\
(2)~If $\mad(G)<5/2$, then $\lcl(G)\le\ceil{\Delta(G)/2}+2$.\\
(3)~If $\mad(G)<16/7$ and $\Delta(G)\ge 3$, then $\lcl(G)=\ceil{\Delta(G)/2}+1$.
\end{theoremA}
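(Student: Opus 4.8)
The plan is to prove each part by the discharging method, establishing the upper bound $\lcl(G)\le\lceil\Delta(G)/2\rceil+c$ with $c=3,2,1$ respectively; for part~(3) the matching lower bound $\lceil\Delta(G)/2\rceil+1$ is already recorded in the introduction, so only the upper bound needs proof. Fix $m\in\{8/3,5/2,16/7\}$ and the corresponding $c$, and suppose for contradiction that some graph with $\mad(G)<m$ (and $\Delta(G)\ge 3$ where required) fails to be $k$-linear-list-colorable for $k=\lceil\Delta/2\rceil+c$; choose a counterexample $G$ minimizing $|V(G)|+|E(G)|$, equipped with lists $L$ of size $k$. Since $\mad$ is monotone under taking subgraphs, every proper subgraph $H$ of $G$ satisfies $\mad(H)<m$, so by minimality it admits a linear $L$-coloring. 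The argument then has two halves: a structural half identifying local configurations that cannot occur in $G$, and a discharging half showing that the sparsity hypothesis forces one of them to occur.

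For the structural half I would first record a counting lemma for extending a linear coloring to a single uncolored vertex $v$ whose neighbors are already colored. Beyond the proper constraint, a color is unusable for $v$ only when it would close a bichromatic cycle through $v$ or raise the degree of some neighbor above $2$ in a bichromatic subgraph; I would show that a neighbor $u$ contributes such a forbidden color only through pairs of equally colored vertices in $N(u)\setminus\{v\}$, so that $2$-vertices, and more generally low-degree neighbors, cost essentially nothing. Using this lemma I would prove reducibility of a list of configurations: a vertex of degree $1$; two adjacent $2$-vertices; longer threads of $2$-vertices; and high-degree vertices carrying too many $2$-vertex neighbors, with the precise thresholds tuned to each $m$. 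Each reduction deletes the offending low-degree vertices (and, where naive deletion fails because the surviving neighbors happen to be colored alike and linked, also uncolors one nearby vertex to regain a free choice), applies minimality to the smaller graph, and extends via the counting lemma.

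The discharging half assigns each vertex the initial charge $\mu(v)=d(v)-m$, so that $\sum_v\mu(v)=2|E(G)|-m|V(G)|<0$ by the $\mad$ hypothesis. I would then move charge from vertices of degree at least $3$ to their $2$-vertex neighbors, with a base rule giving each $2$-vertex enough to reach $0$ and, for the tighter bounds, finer rules distinguishing how many $2$-vertices a high-degree vertex may carry, and verify that in the absence of the reducible configurations every vertex ends with nonnegative charge. This contradicts the negative total and completes the proof; the three thresholds $8/3,5/2,16/7$ should emerge precisely as the break-even points of these rules.

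The part I expect to be the main obstacle is the reducibility analysis, not the discharging. Extending a linear coloring is genuinely harder than extending a proper one, because one must simultaneously avoid bichromatic-cycle closures and degree-$3$ bichromatic vertices, and these obstructions are nonlocal: they depend on how far bichromatic paths reach in the already-colored part of the graph. Getting the forbidden-color count tight enough is what forces the recoloring steps, and it makes part~(3) the delicate case, since with no slack ($c=1$) every reducible configuration and every discharging rule must be pushed to its exact break-even value, leaving no room for a cruder estimate.
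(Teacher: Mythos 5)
First, a point of order: the paper does not prove Theorem~A at all --- it quotes the result from Esperet, Montassier, and Raspaud~\cite{EMR} as background. So your proposal can only be measured against the method of that work and against this paper's own proofs of its stronger Theorem~\ref{main}, which do follow exactly the skeleton you describe (minimal counterexample, reducible configurations, discharging from initial charge $d(v)-m$). Your framework is therefore the right one, but there is a step that is wrong as written and a larger gap of omission. The wrong step is your minimality argument: you claim every proper subgraph $H$ of the minimal counterexample admits a linear $L$-coloring ``by minimality'' since $\mad(H)<m$. For part~(3) this does not follow, because the statement carries the hypothesis $\Delta\ge 3$, while a subgraph arising in a reduction may have $\Delta(H)\le 2$; the theorem then says nothing about $H$, and indeed its formula is false there --- a cycle needs $3$ colors while $\ceil{\Delta(H)/2}+1=2$. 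Even in parts~(1) and~(2), when $\Delta(H)<\Delta(G)$ the induction only yields colorability from lists of size $\ceil{\Delta(H)/2}+c$, and one must separately note this is at most the list size $k$ you actually have. This is precisely the ``complication'' the paper flags explicitly; its fix is to prove the stronger statement parametrized by a fixed integer $M\ge\Delta(G)$, with lists of size $\ceil{M/2}+c$, so that the quantity governing the list size never degrades when passing to subgraphs. Your induction needs this strengthening (or a separate direct argument for subgraphs of maximum degree at most~$2$) to be sound.

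The larger gap is that everything that would make this a proof rather than a plan is deferred. You exhibit no configurations, state no discharging rules, and verify no arithmetic; saying the thresholds $8/3$, $5/2$, $16/7$ ``should emerge as break-even points'' is a hope, not an argument. Moreover, the part you defer is, as you yourself concede, the hard part: with $c=1$ there is no slack, and the reducibility arguments in this regime genuinely require recoloring maneuvers and delicate case analysis rather than a single forbidden-color count --- compare Cases (RC3) and (RC4) in the paper's proof of Theorem~\ref{thm3}, or Case (RC2) of Theorem~\ref{thm2}, where a naive count fails and an already-colored vertex must be recolored. Until those case analyses are actually carried out, with configurations and discharging rules whose constants are checked to mesh, the proposal remains an outline of the standard strategy rather than a proof of Theorem~A.
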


%We now revisit our earlier question, but adapt it to linear list coloring.  What hypotheses on a graph $G$ are sufficient to imply $\lcl(G)\le\ceil{\Delta(G)/2}+C$, for some constant $C$?  We will see soon that as we strengthen our hypotheses, we can reduce the value of $C$ for which the upper bound holds.  We explore this relationship, and ultimately, we prove sufficient conditions to imply that $\lcl(G)=\ceil{\Delta(G)/2}+1$.

%We noted previously that the lower bound $\lc(G)\ge\ceil{\Delta(G)/2}+1$ holds with equality for trees (for example, we can color greedily in order of a breadth-first search from an arbitrary vertex).  The lower bound $\lcl(G)\ge\ceil{\Delta(G)/2}+1$ also holds with equality for trees (by the same argument).  So, to answer our question, we are looking for some generalization of trees.  One natural choice is planar graphs with no short cycles, since they must have only a few cycles (relative to the number of vertices).  We generalize this class of graphs further, with the following definition.  

The {\it girth} of a graph $G$, denoted $g(G)$, or simply $g$, is the length of its shortest cycle.  By an easy application of Euler's formula, %for every planar graph $G$ with girth $g$, we have $\frac{2|E(G)|}{|V(G)|}<\frac{2g}{g-2}$.  Since taking a subgraph does not decrease girth, 
we see that every planar graph $G$ with girth $g$ satisfies $\mad(G)<2g/(g-2)$.    So we can obtain some results on planar graphs from the above results.   
%Next, we state our results, in Theorem~1.  For comparison, we first include similar results due to Esperet, Montassier, and Raspaud~\cite{EMR} and to 
Raspaud and Wang~\cite{RW} proved somewhat stronger results for planar graphs.

\begin{theoremB}[\cite{RW}] Let $G$ be a planar graph:\\
(1)~If $g(G)\ge 5$, then $\lc(G)\le\ceil{\Delta(G)/2}+14$.\\
(2)~If $g(G)\ge 6$, then $\lc(G)\le\ceil{\Delta(G)/2}+4$\\
(3)~If $g(G)\ge 13$ and $\Delta(G)\ge 3$, then $\lc(G)=\ceil{\Delta(G)/2}+1$.
\end{theoremB}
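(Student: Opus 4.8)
The plan is to argue by contradiction: let $G$ be a counterexample that is edge-minimal (and, subject to that, vertex-minimal), and derive a contradiction via the discharging method. Since $G$ is planar with girth at least $5$, I fix a plane embedding and assign to each vertex $v$ the initial charge $d(v)-4$ and to each face $f$ the charge $\ell(f)-4$, where $\ell(f)$ is the length of $f$. By Euler's formula the total charge is exactly $-8$. Because $g(G)\ge 5$, every face begins with charge $\ell(f)-4\ge 1$, so only the low-degree vertices (those of degree $2$ and $3$, with charges $-2$ and $-1$) carry negative charge. The goal is to design discharging rules that move charge from large faces and high-degree vertices onto these deficient vertices so that every vertex and face ends with nonnegative charge, contradicting the total of $-8$.

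The engine of the argument is a list of \emph{reducible configurations}: small subgraphs that cannot occur in $G$. Each is shown reducible by deleting a few low-degree vertices, invoking minimality to linearly color what remains, and then extending the coloring from the palette of $\ceil{\Delta(G)/2}+4$ colors. The central bookkeeping is counting the colors forbidden at a vertex $u$ we must color: its already-colored neighbors forbid their own colors (properness); for each colored neighbor $x$, any color that already appears twice among $N(x)$ is forbidden, since using it at $u$ would give $x$ three like-colored neighbors, hence degree $3$ in a bichromatic subgraph (the linear-forest condition); and finally a bounded number of colors are forbidden to prevent a bichromatic cycle through $u$. The first obstacle to notice is that a single colored neighbor $x$ can forbid as many as $\floor{(d(x)-1)/2}\approx\Delta(G)/2$ colors through the frugality condition alone, which already nearly exhausts the palette. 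The way around this is to control the coloring order so that, when $u$ is colored, at most one of its neighbors is both colored and of high degree. Concretely, I would make essential use of \emph{threads} of $2$-vertices (induced paths whose internal vertices have degree $2$): when such a thread is uncolored and then recolored from the inside out, each $2$-vertex is colored while one of its two neighbors is still uncolored, so only one high-degree neighbor constrains it, leaving roughly $\ceil{\Delta(G)/2}+4-\left(\Delta(G)/2+O(1)\right)=O(1)$ colors available, which suffices.

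With reducibility in hand, I would state the discharging rules so that each large face distributes its surplus $\ell(f)-4$ to incident $2$- and $3$-vertices, and each vertex of degree at least $4$ sends a fixed amount of charge along incident edges to nearby deficient vertices; the absence of the reducible configurations (e.g.\ long threads, or several $2$-vertices clustered around one small-degree vertex or face) guarantees that the needy vertices are spread out enough to be fully paid. The main obstacle, and the reason the additive constant is as large as $14$ for girth $5$, is twofold: first, the acyclicity requirement is genuinely global---forbidding a bichromatic cycle is harder to bound locally than the frugality condition---so the extension lemmas must be argued with care and yield only weak constants; and second, with faces as short as $5$ there is very little surplus charge to redistribute, so one must either tolerate a large additive constant or, as in parts (2) and (3), assume larger girth ($g\ge 6$ or $g\ge 13$) to create enough slack for the much tighter bounds $\ceil{\Delta(G)/2}+4$ and $\ceil{\Delta(G)/2}+1$.
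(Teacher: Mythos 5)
First, a bookkeeping point: Theorem B is not proved in this paper at all --- it is quoted from Raspaud and Wang~\cite{RW}. What this paper does prove (Theorem~\ref{main}) is a strengthening of it, for list coloring and with better constants, using exactly the method you outline: reducible configurations plus discharging. So your proposal cannot be compared against a proof ``in'' the paper, but it can be measured against the paper's proofs of the analogous statements, and against that standard it has genuine gaps.

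The central gap is that your proposal is a template for a discharging proof rather than a proof: you never exhibit a single concrete reducible configuration with a completed extension argument, never state a single concrete discharging rule, and never verify that final charges are nonnegative. In discharging arguments that \emph{is} the proof; everything you wrote is the standard preamble. Worse, the one concrete engine you do commit to --- recoloring threads of $2$-vertices ``from the inside out'' --- cannot carry the argument. A minimal counterexample that is planar with girth $5$ may have minimum degree $3$ (the dodecahedron is such a graph), in which case there are no $2$-vertices and no threads whatsoever, and your reducibility plan has nothing to act on. This is precisely why the paper's Lemma~\ref{girth5lemma} pairs the $2$-vertex configuration (RC1) with a face-based configuration (RC2) (a $5$-face with four $3$-vertices and a $5^-$-vertex), whose reducibility needs a genuinely different and more delicate argument: the naive count gives each of $u_2$, $u_3$ only one available color, and one must refine the analysis (splitting on whether $c(u_1)=c(v_2)$) to show one of them has two. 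Relatedly, your claim that a $2$-vertex colored with one neighbor still uncolored faces only $\Delta(G)/2+O(1)$ forbidden colors ignores the vertex colored \emph{last}: a $2$-vertex both of whose neighbors are colored vertices of degree $\approx\Delta$ forbids roughly $\Delta+2$ colors, exceeding any palette of size $\ceil{\Delta(G)/2}+C$. So the discharging rules must be designed to guarantee that such a $2$-vertex never has to be colored in that position (this is what the paper's (RC1), a $2$-vertex adjacent to a $5^-$-vertex, accomplishes); assuming the convenient configurations exist, rather than forcing them by a verified charge count, is assuming the conclusion.
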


Our goal in the paper is to improve the results in the above two theorems.   We prove the following: 

\begin{theorem}\label{main}
Let $G$ be a graph:\\
(1)~If $G$ is planar and has $g(G)\ge 5$, then $\lcl(G)\le\ceil{\Delta(G)/2}+4$.\\
(2)~If $\mad(G)<3$ and $\Delta(G)\ge 9$, then $\lcl(G)\le\ceil{\Delta(G)/2}+2$.\\
(3)~If $\mad(G)<12/5$ and $\Delta(G)\ge 3$, then $\lcl(G)=\ceil{\Delta(G)/2}+1$.
\end{theorem}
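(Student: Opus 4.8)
The plan is to prove all three parts within a single discharging framework, writing each claimed bound as $\lcl(G)\le\ceil{\Delta(G)/2}+t$ with $t=4,2,1$ for parts (1), (2), (3) respectively; note that the matching lower bound needed in part (3) is already free from the naive bound $\lcl(G)\ge\ceil{\Delta(G)/2}+1$ quoted in the introduction. Fixing $t$ and setting $k=\ceil{\Delta/2}+t$, I would suppose $G$ is a counterexample minimizing $|V(G)|+|E(G)|$, equipped with lists $L$ of size $k$ admitting no linear $L$-coloring; by minimality every proper subgraph of $G$ is linearly $L$-colorable. I would first record that $G$ is connected, has minimum degree $\delta(G)\ge 2$, and inherits the relevant sparsity hypothesis ($\mad(G)<12/5$, $\mad(G)<3$, or planarity with $g\ge5$).

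The heart of each proof is a catalogue of \emph{reducible configurations}: small subgraphs $H$ such that any linear $L$-coloring of $G$ with the vertices of $H$ uncolored can be extended to $H$, contradicting the choice of $G$. The extension rests on a counting lemma for the colors forbidden at an uncolored vertex $v$ of degree $d$. A color $\alpha$ is unavailable for $v$ when (i) some neighbor already has color $\alpha$ (properness); (ii) assigning $\alpha$ would give some neighbor $u$ a third neighbor in color $\alpha$, creating a vertex of degree $3$ in the two classes $\{c(u),\alpha\}$; or (iii) assigning $\alpha$ would close an alternating $\{\alpha,\beta\}$-cycle through $v$ for some already-used color $\beta$ (the degree-$3$ constraint at $v$ itself is instead ruled out structurally, by forbidding configurations in which $v$ has three equally-colored neighbors). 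The key estimate is that each colored neighbor of $v$ contributes only a bounded number of forbidden colors through (i)--(iii), so a vertex whose neighborhood is light enough always has an available color in a list of size $k$. I expect the bookkeeping in (ii)--(iii)---which is precisely what separates linear coloring from proper or merely frugal coloring---to be the most delicate part of this lemma, and it is where the acyclicity condition must be exploited rather than only the degree-$\le 2$ condition. Since the slack $t$ shrinks from $4$ to $1$ across the parts, part (3) needs the sharpest form of this lemma and hence the largest family of reducible configurations (short threads of $2$-vertices, a $2$-vertex with two low-degree neighbors, adjacent small vertices, and the like), while parts (1)--(2) can tolerate cruder estimates.

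With reducibility in hand, I would run the discharging. For parts (2) and (3) I would assign each vertex the charge $\mu(v)=\deg(v)-m$, where $m$ is the relevant threshold ($m=3$ for part (2), $m=12/5$ for part (3)); then $\sum_v\mu(v)=2|E(G)|-m|V(G)|<0$ because $\mad(G)<m$. Choosing rules that send charge from high-degree vertices to their low-degree neighbors, I would show that the absence of the reducible configurations forces every vertex to finish with nonnegative charge, contradicting the negative total. For the planar part (1) I would instead begin from Euler's formula in the form $\sum_{v}(\deg(v)-4)+\sum_{f}(\ell(f)-4)=-8$, give vertices and faces initial charges $\deg(v)-4$ and $\ell(f)-4$, and discharge across incident vertex--face pairs; girth $g\ge5$ guarantees $\ell(f)\ge5$, so every face starts with positive charge to redistribute.

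The main obstacle, I expect, is part (1). Girth $g\ge5$ yields only $\mad(G)<10/3$, which exceeds the threshold $3$ of part (2), so the $\mad$-based argument cannot be reused and one genuinely needs the face structure of a plane embedding. This forces a more elaborate, face-aware discharging and a richer list of reducible configurations than in parts (2)--(3); finding enough reducible configurations to absorb the extra charge that is available only through the short ($5$-)faces, and verifying each under the tightest version of the counting lemma, is where most of the effort will go. A secondary difficulty is the exact tightness of part (3): because the slack is only $+1$, the reducibility estimates and the discharging rules must be balanced precisely against $12/5$ with no loss, which is presumably why the authors also produce an infinite extremal family certifying that $12/5$ cannot be raised.
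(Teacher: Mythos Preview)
Your outline is precisely the paper's approach: for each part, a discharging lemma produces a short list of unavoidable configurations, and each is then shown reducible in a minimal counterexample via exactly the forbidden-color bookkeeping (properness, no neighbor with three like-colored neighbors, no bicolored cycle) that you describe. One calibration worth noting: contrary to your expectation, part~(1) is the \emph{lightest} of the three in the paper---only two configurations suffice (a $2$-vertex adjacent to a $5^-$-vertex, and a $5$-face with four $3$-vertices and one $5^-$-vertex), with charges $\mu(v)=\tfrac32 d(v)-5$ and $\mu(f)=d(f)-5$ so that high-degree vertices feed charge \emph{to} faces which then redistribute to $2$- and $3$-vertices (rather than your $d(v)-4$, $\ell(f)-4$ setup)---whereas parts~(2) and~(3) need five and four configurations respectively and absorb most of the case analysis.
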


Raspaud and Wang~\cite{RW} conjectured that the bound in Theorem~\ref{main}(2) holds for all planar graphs with girth at least 6.  Since every such graph $G$ has $\mad(G)<3$, our result proves their conjecture for graphs with $\Delta(G)\ge 9$.   Since $\mad(K_{3,3})=3$ and $\lc(K_{3,3})=5$,  we can construct an infinite family of sparse graphs $G$ containing $K_{3,3}$ such that $\mad(G)=3$, $\Delta(G)=4$, and $lc(G)>\ceil{\Delta(G)/2}+2$.   Thus, the maximum degree condition in Theorem~\ref{main}(2)  cannot be lower than $5$. 

We also note that $lc(K_{2,3})=4>\ceil{\Delta(K_{2,3})/2}+1$ and $\mad(K_{2,3})=12/5$.  Thus, we can construct an infinite family of sparse graphs containing $K_{2,3}$ with maximum degree at most $4$.  All such graphs have $lc(G)=\ceil{\Delta(G)/2}+2$ and can be made sparse enough so that $\mad(G)=\mad(K_{2,3})=12/5$.  So the bound on $\mad(G)$ in Theorem~\ref{main}(3) is sharp.

%Our paper is organized as follows.  
%In Section~\ref{sec2}, we prove a structural lemma for planar graphs with girth at least 5, and use it to prove that for such a graph $G$, we have $\lcl(G)\le\ceil{\Delta(G)/2}+4$.
%In Section~\ref{sec2}, we prove Theorem~1(1). In Section~\ref{sec3}, we prove Theorem~1(2).  Raspaud and Wang~\cite{RW} conjectured that the bound of Theorem~1(2) holds for all planar graphs with girth at least 6.  Since every such graph $G$ has $\mad(G)<3$, our result proves their conjecture for graphs with $\Delta(G)\ge 9$. Finally, in Section~\ref{sec4}, we prove Theorem~1(3).  It is a strengthening of both Theorem~A(3) and Theorem~B(3).
%This is our strongest result, since here our upper bound matches the obvious lower bound.  Furthermore, for $\Delta(G)=3$ and $\Delta(G)=4$, we give infinite families of graphs such that $\mad(G)=\frac{12}5$ and $\lcl(G)=\ceil{\Delta(G)/2}+2$.  These examples prove that Theorem~1(3) is best possible.

The proofs of our three results all follow the same outline.  First we prove a structural lemma; this says that each graph under consideration must contain at least one from a list of ``configurations''.  Second, we prove that any minimal counterexample to our theorem cannot contain any of these configurations.  In this second step we begin with a linear list coloring of part of the graph, and show how to extend it to the whole graph.
As we extend the coloring, we often say that we ``choose $c(v)\in L(v)$''; by this we mean that we pick some color $c(v)$ from $L(v)$ and use $c(v)$ to color vertex $v$.    In the following three sections,  we will prove our three main results, respectively.  

 For convenience, we introduce the following notation.   A {\it $k$-vertex} is a vertex of degree $k$.  A $k^+$-vertex ($k^-$-vertex) is a vertex of degree at least (at most) $k$.  A {$k$-thread} is a path of $k+2$ vertices, where each of the $k$ internal vertices have degree 2, and each of the end vertices have degree at least 3.

\section{Planar with girth at least 5 implies $\lcl(G)\le\ceil{\frac{\Delta(G)}2}+4$}
\label{sec2}
\begin{lemma}
\label{girth5lemma}
If $G$ is a planar graph with $\delta(G)\ge 2$ and with girth at least $5$,
then $G$ contains one of the following two configurations:
\begin{enumerate}
%\bigskip

%\noindent
\item[(RC1)] a 2-vertex adjacent to a $5^-$-vertex,
%\smallskip

%\noindent
\item[(RC2)] a 5-face with four incident 3-vertices and the fifth incident vertex 
%~~~~~~~~~$\!$
of degree at most 5.
%\bigskip
\end{enumerate}
\end{lemma}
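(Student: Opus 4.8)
The plan is to prove this via the discharging method, which is the standard technique for establishing that sparse or planar graphs must contain one of a prescribed list of local configurations.

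First I would set up the discharging framework. Since $G$ is planar with girth at least $5$, I embed $G$ in the plane and apply Euler's formula $|V| - |E| + |F| = 2$, where $F$ is the set of faces. Combining this with the girth bound (every face has length at least $5$) yields a deficiency inequality. Concretely, I would assign to each vertex $v$ an initial charge $\mu(v) = \deg(v) - 4$ and to each face $f$ an initial charge $\mu(f) = \ell(f) - 4$, where $\ell(f)$ is the length of $f$. Using the relations $\sum_v \deg(v) = \sum_f \ell(f) = 2|E|$, the total charge becomes $\sum_v (\deg(v)-4) + \sum_f (\ell(f)-4) = 2|E| - 4|V| + 2|E| - 4|F| = 4|E| - 4|V| - 4|F| = -8$ by Euler's formula. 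So the total charge is the fixed negative number $-8$, independent of any structural assumptions.

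Next I would argue by contradiction: suppose $G$ contains neither (RC1) nor (RC2). The absence of (RC1) means every $2$-vertex has both neighbors of degree at least $6$. I then design discharging rules that redistribute charge without changing the total, aiming to show that under these hypotheses every vertex and every face ends with nonnegative final charge $\mu^*$. Since faces have length at least $5$, every face already has initial charge $\ell(f) - 4 \ge 1 > 0$, so faces can afford to donate charge. The natural rule is for each face to send charge across its incident $2$-vertices and $3$-vertices, which are the vertices with negative or low initial charge: a $2$-vertex starts at $-2$ and a $3$-vertex starts at $-1$, while $4^+$-vertices are already nonnegative. I would have each $5$-face send, say, a suitable fixed amount to each incident $3$-vertex and a larger amount to each incident $2$-vertex, with $6^+$-faces sending correspondingly more (they have more surplus). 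The absence of (RC1) guarantees each $2$-vertex is flanked by $6^+$-vertices, so I can also route charge to the $2$-vertex from those high-degree neighbors, letting me balance its $-2$ deficit. The absence of (RC2) is precisely the constraint I need on $5$-faces: it forbids the worst case where a $5$-face has four $3$-vertices and its fifth vertex is small, so that such a face would otherwise be drained below zero by its obligations to the $3$-vertices.

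The main obstacle, and the heart of the proof, will be the careful accounting for $5$-faces and the vertices and faces surrounding a $2$-vertex. A $5$-face has surplus only $1$, so if it must feed several incident low-degree vertices the budget is tight; this is exactly why configuration (RC2) is needed to rule out the overloaded case, and verifying that every surviving $5$-face stays nonnegative requires a case analysis on the degree sequence of its five incident vertices. The $2$-vertex analysis is delicate because a $2$-vertex is incident to two faces and two edges, so I must confirm that the combination of face-charges and neighbor-charges reaching it totals at least $2$; this is where the girth-$5$ condition (so no short faces dilute the surplus) and the $6^+$-neighbor condition from $\neg$(RC1) both get used. Once I have chosen the discharging constants so that $\mu^*(v) \ge 0$ for all $v$ and $\mu^*(f) \ge 0$ for all $f$, summing gives $\sum \mu^* \ge 0$, contradicting $\sum \mu^* = \sum \mu = -8$. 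This contradiction forces $G$ to contain (RC1) or (RC2), completing the proof.
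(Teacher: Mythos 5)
Your framework is the right one, and in fact your normalization ($\mu(v)=d(v)-4$, $\mu(f)=\ell(f)-4$, total charge $-8$) can be pushed through to a complete proof parallel to the paper's (which uses $\mu(v)=\tfrac32 d(v)-5$, $\mu(f)=d(f)-5$, total $-10$). But as written there is a genuine gap: a discharging argument \emph{consists of} its explicit rules plus the verification that every vertex and face ends nonnegative, and you supply neither. You never fix the amounts (``a suitable fixed amount,'' ``correspondingly more''), never say which vertices participate as donors, and explicitly defer both the 5-face analysis and the 2-vertex accounting (``requires a case analysis on the degree sequence''), which is where all of the content lies. For comparison, the completed version of your scheme is essentially forced to mirror the paper: each $6^+$-vertex sends $\tfrac13$ to each incident face, and each face sends $1$ to each incident 2-vertex and $\tfrac13$ to each incident 3-vertex. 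Then 2- and 3-vertices end at exactly $0$; a $6^+$-face ends nonnegative by a pairing argument (each adjacent pair of vertices on the face costs the face net at most $\tfrac23$, because a pair containing a 2-vertex has as its other member a $6^+$-vertex paying the face $\tfrac13$); and a 5-face ends nonnegative by cases on $(n_2,n_3,n_{6^+})$, where absence of (RC1) gives $n_2\le 2$ and puts $6^+$-vertices on both sides of every 2-vertex, and absence of (RC2) excludes exactly the single failing case $n_2=0$, $n_3=4$ with fifth vertex of degree at most $5$. None of this checking appears in your proposal, and without it nothing is proved.

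Moreover, the one routing idea you do commit to---sending charge to each 2-vertex directly ``from those high-degree neighbors''---cannot carry real weight in your normalization. A 6-vertex has charge only $2$ yet can be adjacent to as many as six 2-vertices (planarity, girth 5, and the absence of (RC1) do not prevent this), and its incident faces may themselves be 5-faces containing two 2-vertices that need subsidies. Writing $a$ for the face-to-2-vertex payment, $b$ for the direct neighbor-to-2-vertex payment, and $c$ for the $6^+$-vertex-to-face payment, the constraints $2a+2b\ge 2$ (2-vertices), $1+3c\ge 2a$ (a 5-face with two 2-vertices and three $6^+$-vertices), and $6b+6c\le 2$ (a 6-vertex) force $b=0$: the surplus of high-degree vertices must be funneled through the faces, exactly as in the paper's rule (R1), with the faces then paying the low-degree vertices as in rule (R2). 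So you have the right method and the right roles identified for the two configurations, but the discharging rules and the nonnegativity verification---the proof itself---are missing, and the direct vertex-to-vertex payments you sketch would have to be abandoned to complete it.
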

\begin{proof}
We use the discharging method, with initial charge $\mu(f)=d(f)-5$ for each face $f$ and initial charge $\mu(v) = \frac32d(v)-5$ for each vertex $v$.  
%[Note that the sum of the charges is $-10$.]  
By Euler's formula, we have $\sum_{v\in V(G)}\mu(v)+\sum_{f\in F(G)}\mu(f)
= (3|E|-5|V|)+(2|E|-5|F|)
=-5(|F|-|E|+|V|)
=-10$.
We redistribute charge via the following two discharging rules:
%\bigskip
\begin{enumerate}
\item[(R1)] Each $4^+$-vertex $v$ sends charge $\frac{\frac32d(v)-5}{d(v)}$ to each incident face.
\item[(R2)] Each face sends charge 1 to each incident 2-vertex and charge $\frac16$ 
%~~~~~~~~$\!\!$
to each incident 3-vertex.
\end{enumerate}
%\bigskip

Now we will show that if $G$ contains neither configuration (RC1) nor (RC2), then each vertex and each face finishes with nonnegative charge. This is a contradiction, since the discharging rules preserve the sum of the charges (which begins negative).
We write $\mu^*(v)$ and $\mu^*(f)$ to denote the charge at vertex $v$ or face $f$ after we apply all discharging rules.
If $d(v)=2$, then $\mu^*(v) = (\frac32(2)-5)+2(1) = 0$.  If $d(v)=3$, then $\mu^*(v)=(\frac32(3) -5) + 3(\frac16)=0$.  By design, each $4^+$-vertex finishes with charge 0.  So, we now consider the final charge on each face.

Let $f$ be a face of $G$.  For each pair, $u_1$ and $u_2$, of adjacent vertices on $f$, we compute the net charge given from $f$ to $u_1$ and $u_2$.  If neither of $u_1$ and $u_2$ is a 2-vertex, then each vertex receives charge at most $\frac16$ from $f$, so the net charge given from $f$ to $u_1$ and $u_2$ is at most $2(\frac16)=\frac13$.  If one of $u_1$ and $u_2$, say $u_1$, is a 2-vertex, then, since $G$ does not contain (RC1), we have $d(u_2)\ge 6$.  Hence, the net charge given from $f$ to $u_1$ and $u_2$ is at most $1-\frac23=\frac13$.  (This is true because as the degree of a vertex increases beyond 6, the charge it gives to each incident face increases beyond $\frac23$.) By a simple counting argument, we see that the net total charge given from $f$ to all incident vertices is at most $\frac12(\frac13d(f))=\frac16d(f)$.  Since $\mu(f)=d(f)-5$, we see that $\mu^*(f)\ge 0$ when $d(f)\ge 6$.  Now we consider 5-faces.

Suppose $f$ is a 5-face.  Let $n_2$, $n_3$, and $n_{6^+}$ denote the number of 2-vertices, 3-vertices, and $6^+$-vertices incident to $f$.  Note that $\mu^*(f)\ge -n_2-\frac16n_3+\frac23n_{6^+}$.  From (RC1), we have $n_2\le \floor{d(f)/2}=2$.  If $n_2=2$, then $n_3=0$ and $n_{6^+}=3$, so $\mu^*(v)\ge -2 -\frac16(0)+ \frac23(3)=0$.
If $n_2=1$, then $n_{6^+}\ge 2$, so $n_3\le 2$.  Hence, $\mu^*(f)\ge -1-\frac16(2)+\frac23(2)=0$.  

Suppose now that $f$ is a 5-face and $n_2=0$.  Since we have no copy of (RC2), we have either $n_3=4$ and $n_{6^+}=1$, or we have $n_3\le 3$.  In the first case, we get $\mu^*(f)\ge-0-\frac16(4)+\frac23(1)=0$.
%Recall that $n_3\le 3$, since $G$ does not contain (RC2).  
In the second case, note that $f$ has at least two $4^+$-vertices, each of which gives $f$ charge at least $\frac14$.  
Thus $\mu^*(f)\ge -0-\frac16(3)+\frac14(2)=0$.  Hence, every face and every vertex has nonnegative charge.  This contradiction completes the proof.
\end{proof}

In Sections~\ref{sec3} and~\ref{sec4}, we will only assume bounded maximum average degree (rather than planarity and a girth bound).  However, in the proof of the preceeding lemma, we needed the stronger hypothesis of planar with girth at least 5.  Specifically, we used this hypothesis when considering 5-faces.  Our proof relied heavily on the fact that for a 5-face $f$ we have $n_2\le\floor{d(f)/2} < d(f)/2$.

Now we use Lemma~\ref{girth5lemma} to prove the following linear list coloring result, which immediately implies Theorem~1(1).
For technical reasons, we phrase all of our theorems in terms of an integer $M$ such that $\Delta(G)\le M$.  (Without this technical strengthening, when we consider a subgraph $H$ such that $\Delta(H)<\Delta(G)$, we get complications.)  Of course, the interesting case is when $M=\Delta(G)$.

\begin{theorem}
Let $M$ be an integer.  If $G$ is a planar graph with $\Delta(G)\le M$ and girth at least 5, then $\lcl(G)\le\ceil{\frac{M}2}+4$.
\end{theorem}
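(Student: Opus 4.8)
The plan is to prove this by minimal counterexample, combining the structural Lemma~\ref{girth5lemma} with a coloring extension argument. Let $G$ be a vertex-minimal counterexample, so every proper subgraph of $G$ is $L$-linearly-colorable from lists of size $k:=\ceil{M/2}+4$, but $G$ is not. First I would argue that $\delta(G)\ge 2$: a vertex $v$ of degree $1$ can be deleted, the smaller graph colored by minimality, and then $v$ colored last. When we restore $v$, its single neighbor $u$ forbids one color (properness), and the linearity constraint forbids at most one further color for each already-$2$-colored color class through $u$; since $v$ has only one neighbor it creates no new bichromatic path of length $2$, so one forbidden color from $u$ and the constraints are easily dominated by $k$ available colors. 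Hence $G$ has minimum degree at least $2$, and Lemma~\ref{girth5lemma} applies: $G$ contains (RC1) or (RC2).

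The heart of the proof is showing that neither configuration survives in a minimal counterexample, by deleting a carefully chosen vertex (or un-coloring a small set), coloring the rest by minimality, and extending. The key counting principle is this: when we color a vertex $v$ of degree $d$, properness forbids at most $d$ colors (one per neighbor), and linearity forbids a color $\alpha$ only when assigning $\alpha$ to $v$ would create a bichromatic path of length $3$ (three edges) through $v$, or would extend an existing bichromatic $P_2$ into a $P_3$ at $v$. The cleanest bookkeeping is per vertex: for each neighbor $u$ of $v$, the pair of colors already appearing as a bichromatic path ending at $u$ forbids roughly one color choice at $v$. So each neighbor typically contributes at most two forbidden colors (its own color plus one linearity color), giving a crude bound of $2d$ forbidden colors. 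The available list has size $\ceil{M/2}+4 \ge d/2 + 4$ when $d\le M$, so I would need the configurations to force $v$ (and its uncolored neighbors) to have small enough degree, and to exploit that low-degree neighbors ($2$- and $3$-vertices) contribute far fewer constraints than the crude $2d$ bound suggests.

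For (RC1), a $2$-vertex $v$ adjacent to a $5^-$-vertex, I would delete $v$, color $G-v$ by minimality, and re-insert $v$. Its two neighbors $u_1,u_2$ (with, say, $d(u_2)\le 5$) forbid their own two colors; the linearity constraints arise from bichromatic paths that $v$ would complete. Crucially, since $v$ has degree $2$, coloring it can only create a bichromatic path passing through both $u_1$ and $u_2$, and the number of such forbidden colors is controlled by the small degree of $u_2$. A direct count should show the forbidden colors number at most $\ceil{M/2}+3 < k$, leaving a legal color. For (RC2), a $5$-face with four incident $3$-vertices and the fifth vertex of degree at most $5$, I would uncolor the four $3$-vertices (and perhaps the fifth), color the rest by minimality, and recolor these five vertices in a good cyclic order around the face; because each $3$-vertex has only one neighbor off the face, the external constraints are light, and the internal face structure lets me sequence the choices so each vertex sees enough free colors. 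The main obstacle will be the linearity bookkeeping in case (RC2): unlike properness, a linearity conflict depends on colors two steps away and on the current partial coloring of the other face vertices, so I must order the recoloring so that when each vertex is colored, the number of already-committed neighbors (and hence bichromatic-path constraints) stays below the threshold $k$. Getting this ordering and the exact constant $4$ to work simultaneously for all four/five face vertices is the delicate part; I expect to handle the $3$-vertices first (they have a free off-face neighbor's color to absorb slack) and the $5^-$-vertex last, verifying that its at-most-five neighbors contribute few enough constraints given $k=\ceil{M/2}+4$.
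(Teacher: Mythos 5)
Your setup, the $\delta(G)\ge 2$ reduction, and your treatment of (RC1) all match the paper's argument; the count you claim for (RC1) (at most $\ceil{M/2}+3$ forbidden colors, coming from the colors on the two neighbors plus colors appearing twice among their neighbors) is exactly the one that works. The gap is in (RC2), which is where the real work lies, and your plan for it as stated would fail. First, uncoloring the fifth ($5^-$-degree) face vertex and coloring it last is unworkable: its up to three off-face neighbors may each have degree as large as $M$, and each such neighbor $x$ forbids every color already appearing twice on $N(x)$ (otherwise $x$ acquires three same-colored neighbors), which is up to $\floor{(M-1)/2}$ colors per neighbor; together with properness this can far exceed the list size $\ceil{M/2}+4$, so that vertex cannot in general be recolored at all. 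It must stay colored, which is why the paper never deletes it.

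Second, even if you uncolor only the four $3$-vertices, the greedy ``good cyclic order'' idea hits an exact counting wall at whichever vertex is colored last: that vertex has three colored neighbors (two on the face, one off it), so properness forbids $3$ colors, and the sufficient linearity condition (avoid colors appearing on two or more vertices at distance two) forbids up to $\floor{((M-1)+4)/2}=\ceil{M/2}+1$ more, for a total of $\ceil{M/2}+4$ --- the entire list. No ordering escapes this, because the last of the four always has this profile; what is needed is a structural idea, not better sequencing. The paper's solution is to uncolor only the two middle $3$-vertices $u_2,u_3$ and argue by cases: if $c(u_1)=c(v_2)$ (the colored face neighbor and the colored off-face neighbor of $u_2$ share a color), then one forbidden color is saved and $u_2$ has two choices, so one colors $u_3$ first; otherwise $c(u_1)\ne c(v_2)$, and one shows $u_3$ need not avoid $c(u_1)$ or $c(v_2)$ at all, since any $2$-colored cycle through $u_3$ and $u_1$ (or $v_2$) must pass through $u_2$, and the constraints already imposed on $u_2$ block it --- recovering two free colors for $u_3$, so one colors $u_2$ first. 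That exploitation of the face structure, where the linearity constraints of one uncolored vertex are absorbed by the constraints on the other, is the idea your proposal is missing.
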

\begin{proof}
Suppose the theorem is false.  
Let $G$ be a minimal counterexample and let the list assignment $L$ of size $\ceil{\frac{M}2}+4$ be such that $G$ has no linear list coloring from $L$.  
Note that $G$ must be connected.  Suppose $G$ has a 1-vertex $u$ with neighbor $v$.  By minimality, $G-u$ has a linear list coloring from $L$.  Let $L'(u)$ denote the list of colors in $L(u)$ that neither appear on $v$, nor appear twice in  $N(v)$.  Note that $|L'(u)|\ge (\ceil{\frac{M}2}+4) - (\floor{\frac{M-1}2}+1) = 4$.  Thus, if $G$ has a 1-vertex $u$, we can extend a linear list coloring of $G-u$ to $G$.  So we may assume that $\delta(G)\ge 2$.
Since $G$ is a planar graph with $\delta(G)\ge 2$ and girth at least 5, $G$ contains one of the two configurations specified in Lemma~\ref{girth5lemma}.  
\bigskip

{\bf Case (RC1):}
First, suppose that $G$ contains a 2-vertex $u$ adjacent to a $5^-$-vertex $v$.
Let $w$ be the other neighbor of $u$.
By minimality, $G-u$ has a linear list coloring from $L$.
%with at most $\ceil{\frac{M}2}+4$ colors.  
In order to avoid creating any 2-colored cycles and to also avoid creating any vertices that have three neighbors with the same color, it is sufficient to avoid coloring $u$ with any color that appears two or more times in $N(v)\cup N(w)$.  Furthermore, $u$ must not receive a color used on $v$ or on $w$.
 Let $L'(u)$ denote the list of colors in $L(u)$ that may still be used on $u$.
We have $|L'(u)|\ge  (\ceil{\frac{M}2}+4) - (\floor{\frac{(M-1)+(5-1)}2}+2) =(\ceil{\frac{M}2}+4)-(\ceil{\frac{M}2}+3)=1$.  Thus, we can extend a linear list coloring of $G-u$ to a linear list coloring of $G$.
\bigskip

{\bf Case (RC2):}
Suppose instead that $G$ contains a 5-face $f$ with four incident 3-vertices and with the fifth incident vertex of degree at most 5.
We label the vertices as follows:
%in Figure~1: 
let $u_1$, $u_2$, $u_3$, and $u_4$ denote successive 3-vertices, and let $v_2$ and $v_3$ denote the neighbors of $u_2$ and $u_3$ not on $f$. 
%and let $w$ denote the final vertex on $f$.

By minimality, $G-\{u_2,u_3\}$ has a linear list coloring from $L$.
%with at most $\ceil{\frac{M}2}+4$ colors.  
Now we will extend the coloring to $u_2$ and $u_3$.  
Let $L'(u_2)$ and $L'(u_3)$ denote the colors in $L(u_2)$ and $L(u_3)$ that are still available for use on $u_2$ and $u_3$.
When we color $u_2$, we clearly must avoid the colors on $u_1$ and $v_2$.  We also want to avoid creating a 2-colored cycle or a vertex that has three neighbors with the same color.  To do this, it suffices to avoid any color that appears on two or more vertices at distance two from $u_2$.  This gives us an upper  bound on the number of forbidden colors: $2+\floor{\frac{(M-1)+2+2}2}=\ceil{\frac{M}2}+3$.  So $|L'(u_2)|=\ceil{\frac{M}2}+4-(\ceil{\frac{M}2}+3)\ge 1$.  An analagous count shows that $|L'(u_3)|\ge 1$.  However, we might have $L'(u_2)=L'(u_3)$.  Thus, we now refine this argument to show that $|L'(u_2)|\ge 2$ or $|L'(u_3)|\ge 2$.

First suppose that $c(u_1)=c(v_2)$. % (or $c(u_4)=c(v_3)$).  
Since the colors on $u_1$ and $v_2$ are the same, these two vertices only forbid a single color from use on $u_2$, rather than the two colors we accounted for above.  Thus we get $|L'(u_2)|\ge 2$.  As above, $|L'(u_3)|\ge 1$, so we first color $u_3$, then color $u_2$ with a color not on $u_3$.  This gives the desired linear coloring of $G$.  Hence, we conclude that $c(u_1)\ne c(v_2)$.  
%By symmetry, we conclude $c(u_4)\ne c(v_3)$.

Since $c(u_1)\ne c(v_2)$, when we color $u_3$, we need not fear creating three neighbors of $u_2$ with the same color.  Further, we need not worry about giving $u_3$ the same color as either $u_1$ or $v_2$, for the following reason.  Any 2-colored cycle that contains $u_3$ and either $u_1$ or $v_2$ must also contain $u_2$ and either $u_4$ or $v_3$.  Thus, by requiring that $u_2$ not get a color that appears on two or more vertices at distance two, we avoid such a 2-colored cycle.  
%Now we can reduce our constraints when coloring $u_3$.  
So in fact, $u_3$ only needs to avoid colors that appear on $v_3$, on $u_4$, or on at least two vertices of $N(u_4)\cup N(v_3)$.  This observation gives us $|L'(u_3)|\ge (\ceil{\frac{M}2}+4) - (\floor{\frac{(M-1)+2}2}+2) =(\ceil{\frac{M}2}+4)-(\ceil{\frac{M}2}+2)=2$.  So we can color $u_2$, then color $u_3$ with a color not on $u_2$.  This gives the desired linear list coloring, and completes the proof.
\end{proof}

%\begin{cor}
%If $G$ is a planar graph with girth at least 5, then $\lcl(G)\le\ceil{\frac{\Delta(G)}2}+4$.
%\end{cor}
A similar, but more detailed, argument proves that if $G$ is a planar graph with girth at least 5 and $\Delta(G)\ge 15$, then $lcl(G)\le\ceil{\frac{\Delta(G)}2}+3$.  A brief sketch of this proof is as follows.  First, we can refine Lemma~\ref{girth5lemma} to show that if $\Delta(G)\ge 15$, then in (RC2) at most two neighbors of $u_1$, $u_2$, $u_3$, and $u_4$ can have high degree.  (The key insight is that our present argument only requires that each $6^+$-vertex give charge $\frac23$ to each incident face; not charge $(\frac32d(v)-5)/d(v)$.  Thus, these high degree vertices have lots of extra charge that they can send to adjacent 3-vertices.)  With a more careful analysis, we can show that both the original configuration (RC1) and this strengthened version of (RC2) are reducible even with only $\ceil{\frac{\Delta(G)}2}+3$ colors.

\section{$\mad(G)<3$ and $\Delta(G)\ge 9$ imply $\lcl(G)\le\ceil{\frac{\Delta(G)}2}+2$}
\label{sec3}

\begin{lemma}
\label{lemma3}
If $G$ is a graph with $\mad(G)<3$, $\delta(G)\ge 2$, and $\Delta(G)\ge 9$,
then $G$ contains one of the following five configurations:
\bigskip

(RC1) a 2-vertex $u$ adjacent to vertices $v$ and $w$ such that $\ceil{\frac{d(v)}2}+\ceil{\frac{d(w)}2}<\ceil{\frac{\Delta(G)}2}+2$,
\smallskip

(RC2) a 3-vertex $u$ adjacent to a 2-vertex and to two other vertices $v$ and $w$, such that $d(v)+d(w)\le8$,
%(RC2) a 3-vertex $u$ adjacent to vertices $v$, $w$, and $x$ such that $\floor{\frac{d(v)+d(w)+d(x)-1}2}<\ceil{\frac{\Delta(G)}2}$.
\smallskip

(RC3) a 3-vertex adjacent to two 2-vertices,
\smallskip

(RC4) a 4-vertex adjacent to four 2-vertices,
\smallskip

(RC5) 
a 5-vertex $u$ that is adjacent to four 2-vertices, each of which is adjacent to another $8^-$-vertex; and $u$ is also adjacent to a fifth $3^-$-vertex.
%\smallskip
%
%(RC6) a 5-vertex adjacent to a 3-vertex and also adjacent to four 2-vertices, each of which is adjacent to another $7^-$-vertex. 
\bigskip

\end{lemma}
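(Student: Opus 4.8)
The plan is to prove this structural lemma by discharging. I would assign to each vertex $v$ the initial charge $\mu(v)=d(v)-3$; since $\mad(G)<3$ forces $2|E(G)|<3|V(G)|$, the total charge $\sum_{v}\mu(v)=2|E(G)|-3|V(G)|$ is negative. I then assume $G$ contains none of (RC1)--(RC5) and design discharging rules that leave every vertex with nonnegative final charge, contradicting the negative total. The only vertices with negative initial charge are the $2$-vertices (charge $-1$), while $3$-vertices break even and $k$-vertices with $k\ge 4$ carry surplus $k-3$; so every rule should push charge from high-degree vertices toward the $2$-vertices, occasionally by way of an intervening $3$-vertex.

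Before fixing the rules I would record the structural consequences of excluding the configurations. From the absence of (RC1), taking a neighbor of degree $2$ shows that no two $2$-vertices are adjacent, and, since $\Delta(G)\ge 9$ gives $\ceil{\Delta(G)/2}+2\ge 7$, the two neighbors $v,w$ of any $2$-vertex satisfy $\ceil{d(v)/2}+\ceil{d(w)/2}\ge 7$; hence at most one of them has degree at most $6$, and whenever one neighbor is small the other is forced near $\Delta(G)$. The absence of (RC3) caps the $2$-neighbors of a $3$-vertex at one, the absence of (RC2) guarantees that a $3$-vertex adjacent to a $2$-vertex has some neighbor of degree at least $5$ (its other two neighbors sum to at least $9$), and the absence of (RC4) caps the $2$-neighbors of a $4$-vertex at three. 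With these in hand I would use rules of roughly the following shape: each $2$-vertex pulls a base amount $\tfrac13$ from each of its two neighbors together with an extra $\tfrac13$ from its larger neighbor (so that it collects its full deficit $1$), and each $3$-vertex that has fed an adjacent $2$-vertex recovers $\tfrac13$ from a guaranteed neighbor of degree at least $5$.

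The verification is then a case analysis on $d(v)$. A $4$-vertex pays only the base $\tfrac13$ to each of its at most three $2$-neighbors, finishing with charge at least $1-3\cdot\tfrac13=0$ exactly because (RC4) is excluded; and a vertex of degree $d$ that is ever asked to send the full $\tfrac23$ to a neighbor survives precisely when $d-3\ge\tfrac23 d$, i.e.\ when $d\ge 9$. This is the source of the hypothesis $\Delta(G)\ge 9$, since the near-maximum neighbors forced by (RC1) are exactly the ones carrying the $\tfrac23$ top-ups. The crux, and the step I expect to be hardest, is the $5$-vertex: its surplus is only $2$, so I must show that excluding (RC5) prevents it from being simultaneously over-committed to four incident $2$-vertices and to a needy fifth neighbor. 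Here the detailed degree bookkeeping matters, in particular the clause of (RC5) controlling the degrees of the second neighbors of the incident $2$-vertices (the $8^-$ condition), which governs whether those $2$-vertices can draw their top-ups elsewhere rather than from the $5$-vertex. Calibrating a single set of coefficients so that all $2$-vertices are satisfied while none of the low-surplus $4$- and $5$-vertices, nor the zero-surplus $3$-vertices, is pushed negative is the real work; once the coefficients are pinned down, each case reduces to a short arithmetic check, and the resulting universal nonnegativity contradicts the negative initial total.
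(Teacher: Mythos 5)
Your overall framework matches the paper's: initial charge $d(v)-3$, the observation that excluding (RC1) ties the degrees of a $2$-vertex's two neighbors together, and a case check that every vertex ends nonnegative. But your specific calibration has a genuine gap at $7$- and $8$-vertices, and it cannot be patched without changing the rules into essentially the paper's. Your rule makes the \emph{larger} neighbor of each $2$-vertex pay $\frac23$, and you justify this by claiming that (RC1) forces every such larger neighbor to have degree at least $9$, where $d-3\ge \frac23 d$ holds. That claim is false. Excluding (RC1) with $\Delta(G)\ge 9$ only gives $\ceil{\frac{d(v)}2}+\ceil{\frac{d(w)}2}\ge \ceil{\frac{\Delta(G)}2}+2\ge 7$; when the smaller neighbor $v$ has degree $5$ or $6$, this permits $d(w)=7$ (indeed $\ceil{\frac52}+\ceil{\frac72}=7$, so a $2$-vertex with neighbors of degrees $5$ and $7$ is \emph{not} configuration (RC1) when $\Delta(G)=9$). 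A $7$-vertex all of whose neighbors are such $2$-vertices is consistent with excluding (RC1)--(RC5) locally, yet under your rules it pays $7\cdot\frac23=\frac{14}3$ against a surplus of only $4$, ending negative; the same failure occurs for $8$-vertices ($\frac{16}3>5$). The forcing to degree $\ge 9$ that you invoke only follows when the $2$-vertex's other neighbor has degree at most $4$.

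There is also a diagnostic red flag you should have caught: in your scheme the $5$-vertex never pays more than $\frac13$ to any neighbor (it is never the larger neighbor, since the other side is forced to degree $\ge 7$), so it trivially spends at most $\frac53<2$ and configuration (RC5) --- including its carefully worded ``adjacent to another $8^-$-vertex'' clause --- is never used. A structural lemma's configuration list is normally tight against its discharging proof, so an unused configuration signals miscalibration. Indeed, the paper's rules resolve exactly the $7$/$8$-vertex problem you run into: every $6^+$-vertex $w$ pays only $\frac{d(w)-3}{d(w)}$ per needy neighbor (so it can never overspend its surplus), and the shortfall at a $2$-vertex whose neighbors have degrees $5$ and $7$ or $8$ is covered by having the $5$-vertex pay the premium $\frac37$ precisely when the $2$-vertex's other neighbor is an $8^-$-vertex, giving $\frac37+\frac47=1$. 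The cost of that premium is what makes a $5$-vertex vulnerable, and excluding (RC5) --- four premium payments plus a needy fifth $3^-$-neighbor --- is exactly what saves it. So your proof, as calibrated, does not go through, and the repair is not a minor adjustment of coefficients but the redistribution built into the paper's rules (R2) and (R3).
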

In fact, the hypothesis $\Delta(G)\ge 9$ cannot be omitted (though the lower bound can possibly be reduced), as we show after we prove the lemma.

\begin{proof}
We use discharging, with initial charge $\mu(v) = d(v)-3$ for each vertex $v$. 
Since $\mad(G)<3$, the sum of the initial charges is negative.
Note that only the $2$-vertices have negative charge, so we design our discharging rules to pass charge to the 2-vertices.  
We redistribute the charge via the following three discharging rules:
\smallskip

\begin{enumerate}
\item[(R1)] Every $4$-vertex gives charge $\frac13$ to each adjacent $2$-vertex.
\item[(R2)] Every $5$-vertex 
gives charge $\frac37$ to each adjacent 2-vertex that is also adjacent to another $8^-$-vertex; 
%it gives charge $\frac25$ to every other adjacent 2-vertex; 
and it gives charge $\frac5{14}$ to every adjacent 3-vertex and every other adjacent 2-vertex.
\item[(R3)] Every $6^+$-vertex $v$ gives charge $\frac{d(v)-3}{d(v)}$ to each adjacent $2$-vertex and $3$-vertex.
\item[(R4)] Every $3$-vertex gives its charge (that it received from rules (R2) and (R3)) to its adjacent $2$-vertex (if it has one). 
\end{enumerate}

We will show that if $G$ contains none of the five configurations (RC1)--(RC5), then each vertex finishes with nonnegative charge, which is a contradiction.
The following observation is an immediate corollary of the fact that $G$ contains no copy of (RC1).  We will use this observation below, to show that every vertex finishes with nonnegative charge.

\begin{observation}\label{obs1}
Suppose that a 2-vertex $u$ has neighbors $v$ and $w$.
%Suppose that vertices $u$ and $v$ share $2$-vertex $w$ as a common neighbor.  %Then
\begin{enumerate}
\item[(i)] If $d(v)\in \{3, 4\}$, then $d(w)= \Delta(G)$ if $\Delta(G)$ is odd,  and $d(w)\ge \Delta(G)-1$ if $\Delta(G)$ is even. 
\item[(ii)] If $d(v)\in \{5, 6\}$, then $d(w)\ge \Delta(G)-2$ if $\Delta(G)$ is odd, and $d(w)\ge \Delta(G)-3$ if $\Delta(G)$ is even. 
\end{enumerate}
\end{observation}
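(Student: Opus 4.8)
The plan is to derive both parts directly from the absence of configuration (RC1). Since $G$ contains no copy of (RC1), every 2-vertex $u$ with neighbors $v$ and $w$ must satisfy the reverse inequality, namely $\ceil{\frac{d(v)}2}+\ceil{\frac{d(w)}2}\ge\ceil{\frac{\Delta(G)}2}+2$. This single inequality is the only input I need; everything else is parity bookkeeping on ceilings. The key elementary fact I will invoke repeatedly is that, for an integer $m$, the statement $\ceil{\frac{m}2}\ge t$ is equivalent to $m\ge 2t-1$.

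For part (i), I would note that $\ceil{\frac{d(v)}2}=2$ whenever $d(v)\in\{3,4\}$. Substituting into the inequality yields $\ceil{\frac{d(w)}2}\ge\ceil{\frac{\Delta(G)}2}$. I then translate this ceiling bound into an explicit degree bound by splitting on the parity of $\Delta(G)$. When $\Delta(G)$ is odd, $\ceil{\frac{\Delta(G)}2}=\frac{\Delta(G)+1}2$, so the inequality forces $d(w)\ge\Delta(G)$; since $d(w)\le\Delta(G)$ by definition of the maximum degree, this gives $d(w)=\Delta(G)$. When $\Delta(G)$ is even, $\ceil{\frac{\Delta(G)}2}=\frac{\Delta(G)}2$, and the same conversion forces $d(w)\ge\Delta(G)-1$.

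Part (ii) proceeds identically, using the fact that $\ceil{\frac{d(v)}2}=3$ whenever $d(v)\in\{5,6\}$. Substitution now gives $\ceil{\frac{d(w)}2}\ge\ceil{\frac{\Delta(G)}2}-1$, which after the same parity analysis yields $d(w)\ge\Delta(G)-2$ when $\Delta(G)$ is odd and $d(w)\ge\Delta(G)-3$ when $\Delta(G)$ is even.

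There is no genuine obstacle here; as the authors remark, the statement is an immediate corollary of not containing (RC1). The only point requiring any care is the conversion of a lower bound on $\ceil{\frac{d(w)}2}$ into a lower bound on $d(w)$ itself, where one must remember the off-by-one behavior of the ceiling and correctly separate the odd and even cases for $\Delta(G)$. Beyond that, the proof is a one-line substitution followed by two short parity computations.
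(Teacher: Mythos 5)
Your proof is correct and takes exactly the paper's route: the paper simply declares the observation ``an immediate corollary of the fact that $G$ contains no copy of (RC1),'' and your argument is precisely that deduction, with the ceiling/parity arithmetic that the authors leave implicit written out in full. Nothing is missing and nothing differs in substance.
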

%\begin{observation}\label{cor2}
%If $u$ is a $3$-vertex with neighbors $v, w, x$, such that $d(x)=2$, then $d(v)+d(w)\ge 9$.  
%\end{observation}
%\bigskip

We now use Observation~\ref{obs1} to show that every vertex finishes with nonnegative charge.
It is clear from (R3) that every $6^+$-vertex finishes with nonnegative charge.  The same is true of 3-vertices.  So we consider 4-vertices, 5-vertices, and 2-vertices.  

Suppose $d(u)=4$.
Since $G$ contains no copy of (RC4), every 4-vertex $u$ is adjacent to at most three 2-vertices.  Thus, we have $\mu^*(u)\ge \mu(u)-3(\frac13)=1-3(\frac13)=0$.

Suppose $d(u)=5$.
If $u$ has two or more neighbors that each receive charge at most $\frac5{14}$ from $u$, then $\mu^*(u)\ge \mu(u) - 3(\frac37)-2(\frac5{14}) = 2-\frac{14}7=0$.
Similarly, if $u$ has one neighbor that receives no charge from $u$, then $\mu^*(u)\ge \mu(u) - 4(\frac37) > 0$.
Hence, we may assume that $u$ sends charge to each neighbor, and that it sends charge $\frac37$ to at least four of its neighbors.
However, this assumption implies that $G$ contains a copy of configuration (RC5), which is a contradiction.
%Now suppose $u$ is adjacent to a 2-vertex that is adjacent to another $7^-$-vertex.  Since $G$ contains no copy of (RC5), some neighbor of $u$ must be a $3^+$-vertex.  Hence, $\mu^*(u)\ge \mu(u) - 4(\frac37)-\frac27=2-\frac{14}7=0$.
%every 5-vertex $v$ gives charge $\frac37$ to at most four adjacent vertices (and.  T

Finally, suppose $d(u)=2$.
Let the neighbors of $u$ be $v$ and $w$. 
Since $\mu(u)=-1$, it suffices to show that $u$ always receives charge at least $1$.  
If $d(v)\ge 6$ and $d(w)\ge 6$, then $v$ and $w$ each give $u$ charge at least $\frac12$.  So we may assume that $d(v)\le 5$.    
Suppose $d(v)=5$. 
Since $\Delta(G)\ge 9$, Observation~\ref{obs1} implies that $d(w)\ge 7$. 
If $d(w)\in \{7,8\}$, then $u$ receives charge at least $\frac37+\frac47=1$.
%If $d(w)=8$, then $u$ receives charge $\frac37+\frac58>1$.
If $d(w)\ge 9$, then $u$ receives charge at least $\frac5{14}+\frac69>1$.
%Thus $u$ either receives charge at least $\frac{3}{7}+\frac{4}{7}=1$ 
%or receives charge at least $\frac25+\frac58>1$.
%{\bf [XXXX This is a problem XXXX]}.   
%

If $d(v)=4$, then Observation~\ref{obs1} implies that $d(w)\ge 9$, so $u$ receives charge at least $\frac{1}{3}+\frac{6}{9}=1$.  
If $d(v)=3$, then the absence of (RC2) implies that at least one neighbor $x$ of $v$ has degree at least $5$, so $v$ receives charge at least $\frac5{14}$ from $x$. 
Since $v$ can have at most one adjacent $2$-vertex, $u$ gets charge at least $\frac5{14}$ from $v$.  Hence, the total charge that $u$ receives is at least $\frac69+\frac5{14}>1$.  
\end{proof}

Now we give two examples to show that the hypothesis $\Delta(G)\ge 9$, in Lemma~\ref{lemma3} above, can not be omitted.  (We do suspect, however, that this hypothesis can be replaced by $\Delta(G)\ge 7$, or perhaps even by $\Delta(G)\ge 5$.)  We first give an example with maximum degree 3.  Let $G$ be the dodecahedron, and let $E$ be a matching in $G$ of size 6, such that every face of $G$ contains one edge of $E$.  Form $\widehat{G}$ from $G$ by subdividing each edge of the matching.  The girth of $\widehat{G}$ is 6, so (by an easy application of Euler's formula), $\mad(\widehat{G})<3$.  Despite having $\mad(\widehat{G})<3$, $\widehat{G}$ does not contain any of the five configurations (RC1)--(RC5) in Lemma~\ref{lemma3}.  Now we give an example with maximum degree 4.  Let $G$ be the octahedron, and let $E$ be a perfect matching in $G$.  Form $\widehat{G}$ from $G$ by subdividing every edge of $G$ except the three edges of $E$.  The average degree of $\widehat{G}$ is $(4\times 6+2\times 9)/(6+9)=\frac{14}5$; it is an easy exercise to verify that $\mad(\widehat{G})=\frac{14}5$.  Again $\widehat{G}$ contains none of configurations (RC1)--(RC5).

Now we use Lemma~\ref{lemma3} to prove the following linear list coloring result, which immediately implies Theorem~1(2).
%\bigskip

%\subsection{Reducible Configurations}

\begin{theorem}
Let $M\ge 9$ be an integer.  If $G$ is a graph with $\mad(G)< 3$ and $\Delta(G)\le M$, then $\lcl(G)\le \ceil{\frac{M}{2}}+2$.   
\label{thm2}
\end{theorem}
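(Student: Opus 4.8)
The plan is to reuse the discharging-plus-reducibility template of Section~\ref{sec2}: assume a minimal counterexample and show it can contain none of the configurations (RC1)--(RC5) supplied by Lemma~\ref{lemma3}, a contradiction. Fix $M$ and let $G$ be a counterexample with $|V(G)|+|E(G)|$ minimum, together with a bad list assignment $L$ with $|L(v)|=\ceil{M/2}+2$ for every $v$. Since both $\mad(G)<3$ and $\Delta(G)\le M$ are inherited by subgraphs, every proper subgraph $H$ of $G$ satisfies the hypotheses with the \emph{same} $M$, so $H$ has a linear list coloring from $L$; pinning the list size to $M$ rather than to $\Delta(H)$ is precisely the point of the $M$-device, as it lets me re-invoke minimality even after deleting vertices drops the maximum degree. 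A one-line count then gives $\delta(G)\ge 2$: a $1$-vertex $u$ with neighbor $v$ forbids at most $1+\floor{(M-1)/2}=\ceil{M/2}$ colors, leaving at least two choices.

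With $\delta(G)\ge 2$ in hand, the substantive case is $\Delta(G)\ge 9$, where Lemma~\ref{lemma3} forces one of (RC1)--(RC5); the residual range $\Delta(G)\le 8$, not covered by Lemma~\ref{lemma3} but where there are comparatively many colors relative to $\Delta(G)$, I would dispose of by a parallel, easier discharging argument (the sparsity $\mad(G)<3$ again forcing a reducible configuration). For each configuration I would delete its $2$-vertices (and the centre too, when the centre has no other neighbors), color the remainder by minimality, and re-extend. The uniform tool is the one already used in Section~\ref{sec2}: to preserve linearity it suffices that each re-colored $2$-vertex $u$ with neighbors $v,w$ avoid $c(v),c(w)$ together with every color occurring at least twice on $(N(v)\cup N(w))\setminus\{u\}$, since the latter simultaneously rules out a monochromatic triple at $v$ or $w$ and a bichromatic cycle through $u$. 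Pushing the degree bounds defining (RC1)--(RC4) through this rule yields, in each case, a forbidden set strictly smaller than $\ceil{M/2}+2$, so the extension goes through.

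The main obstacle is coordinating the \emph{simultaneous} extension when several low-degree vertices lie close together, because there the naive per-vertex count can leave only one available color apiece while those singleton sets coincide and conflict. Exactly as in the two-vertex analysis of Section~\ref{sec2}, I would refine the count to produce some vertex with at least two available colors---typically by exhibiting a coincidence among its forbidden colors (two vertices at distance two sharing a color, as with the case $c(u_1)=c(v_2)$ there)---and then color in a safe order. Configuration (RC5) is the hardest: the $5$-vertex $u$ meets four $2$-vertices, each with a prescribed $8^-$-neighbor, plus a fifth $3^-$-neighbor, and here the per-vertex counts land right at the boundary $\ceil{M/2}+2$, so nothing is free. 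I expect to choose $c(u)$ first to maximize slack at the four $2$-vertices and then to extend to them via a system-of-distinct-representatives / Hall-type argument, using the $8^-$ and $3^-$ degree caps (through Observation~\ref{obs1}) to keep every availability count large enough; verifying Hall's condition while still forbidding the handful of bichromatic-cycle colors is the delicate step, and is where I would concentrate the effort.
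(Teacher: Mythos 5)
Your setup (minimal counterexample with the $M$-device so that subgraphs inherit the hypotheses, then the $\delta(G)\ge 2$ reduction) matches the paper's and is correct, but everything after that is left as a plan, and the two places where you commit to concrete steps are exactly where the plan breaks. First, the uniform recipe ``delete the $2$-vertices and re-extend each one by avoiding $c(v)$, $c(w)$, and colors appearing twice on $(N(v)\cup N(w))\setminus\{u\}$'' fails on (RC2): the $2$-vertex there has one neighbor of unbounded degree, so this rule forbids up to $2+\floor{\frac{2+(M-1)}{2}}=\ceil{\frac M2}+2$ colors, i.e.\ possibly its entire list. The degree bound $d(v)+d(w)\le 8$ in (RC2) is attached to the \emph{$3$-vertex} $u$, so it is $u$ that must be deleted and recolored (at most $3+\floor{\frac{d(v)+d(w)-1}{2}}\le 6<7$ forbidden colors), and even then one must first dispose of the degenerate case where all three neighbors of $u$ already share a color, since then no extension exists and the paper recolors the adjacent $2$-vertex before extending. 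Second, for (RC5) your coloring order is backwards, and the Hall/SDR framing does not fit: the constraints are not a system of distinct representatives (colors may repeat, but only twice, and pairs of $v_i$'s interact with $c(u)$ and the $c(w_i)$'s through the $2$-colored-cycle condition). Concretely, if you pick $c(u)$ first and make it avoid $c(w_1),\dots,c(w_4)$ to kill all cycles through $u$, each $v_i$ retains only $|L'(v_i)|-2\ge 1$ colors after discarding $c(u)$ and $c(v_5)$, and these singletons can coincide: with $L'(v_i)=\{a,b,c(v_5)\}$ for all $i$ and $L'(u)=\{a,c(w_1),\dots,c(w_4)\}$ you are forced to set $c(u)=a$, all four $v_i$ then get $b$, and $u$ has three like-colored neighbors. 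If instead you allow $c(u)$ to hit some $c(w_i)$, the cycle constraints between pairs of $v_i$'s return, and deciding which option is safe is precisely the case analysis your sketch defers. The paper's resolution is the opposite order and needs no such analysis: delete $c(v_5)$ from the lists of $v_1,\dots,v_4$ (each keeps $\ge 2$ colors, using $d(w_i)\le 8$), color the $v_i$ so that no color is used more than twice, and only then color $u$, which has $|L'(u)|\ge\ceil{\frac M2}\ge 5$ available colors and at most four forbidden ones (the $v_i$-colors plus at most two $c(w_i)$'s, one per repeated pair). The same ordering is what makes (RC4) work, and (RC3) requires a genuinely intricate multi-case argument (ending with the contradiction $|L'(v_i)|\ge 3$) that ``refine the count and color in a safe order'' does not supply.

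On your case split: you are right that Lemma~\ref{lemma3} carries the hypothesis $\Delta(G)\ge 9$, which a minimal counterexample need not satisfy (the paper's own write-up silently elides this point). But your fix---a ``parallel, easier discharging argument'' for $\Delta(G)\le 8$---cannot exist in the form you describe: the paper's subdivided dodecahedron and octahedron are graphs with $\mad<3$, $\delta\ge 2$, and $\Delta\in\{3,4\}$ that contain none of (RC1)--(RC5), so no discharging argument can force those configurations in that range. The correct repair is not a second argument but a reparametrization of the first: prove Lemma~\ref{lemma3} with $M$ in place of $\Delta(G)$, i.e.\ with (RC1) reading $\ceil{\frac{d(v)}2}+\ceil{\frac{d(w)}2}<\ceil{\frac M2}+2$, which is exactly the form used in Case (RC1) of the theorem's proof. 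Absence of this configuration yields Observation~\ref{obs1} with $M$ in place of $\Delta(G)$ (for instance $d(v)\in\{3,4\}$ forces $d(w)\ge 2\ceil{\frac M2}-1\ge 9$), and every inequality in the discharging uses only $M\ge 9$ and $\Delta(G)\le M$, never $\Delta(G)\ge 9$. The five configurations are then available for every minimal counterexample, and your residual case disappears.
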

\begin{proof}
Suppose the theorem is false.  
Let $G$ be a minimal counterexample and let the list assignment $L$ of size $\ceil{\frac{M}2}+2$ be such that $G$ has no linear list coloring from $L$.  
Since $M\ge 9$, we have $|L(v)|=\ceil{\frac{M}2}+2\ge 7$ for every $v\in V$.
%Let $G$ be a minimal counterexample; 
Note that $G$ must be connected.  Suppose $G$ has a 1-vertex $u$ with neighbor $v$.  By minimality, $G-u$ has a linear list coloring from $L$.
%with at most $\ceil{\frac{M}2}+2$ colors.  
Let $L'(u)$ denote the list of colors in $L(u)$ that neither appear on $v$, nor appear twice in  $N(v)$.  Note that $|L'(u)|\ge (\ceil{\frac{M}2}+2) - (\floor{\frac{M-1}2}+1) = 2$.  Thus, if $G$ has a 1-vertex $u$, we can extend a linear list coloring of $G-u$ to $G$.  So we may assume that $\delta(G)\ge 2$.

Since $G$ is a graph with $\delta(G)\ge 2$ and $\mad(G)<3$, $G$ contains one of the five configurations (RC1)--(RC5) specified in Lemma~\ref{lemma3}.  We consider each of these five configurations in turn, and in each case we construct a linear coloring of $G$ from $L$.
\bigskip

{\bf Case (RC1):} Suppose that $G$ contains configuration (RC1).
Let $u$ be a 2-vertex adjacent to vertices $v$ and $w$ such that $\ceil{\frac{d(v)}2}+\ceil{\frac{d(w)}2}<\ceil{\frac{M}2}+2$.  By the minimality of $G$, subgraph $G-u$ has a linear list coloring $c$.   

If $c(v)\ne c(w)$, then $u$ can receive any color except for $c(v)$, $c(w)$, and those colors that appear twice on $N(v)$ or twice on $N(w)$.    So the number of colors forbidden is at most $2+ \floor{\frac{d(v)-1}{2}}+\floor{\frac{d(w)-1}{2}}= \ceil{\frac{d(v)}{2}}+\ceil{\frac{d(w)}{2}}$.  
Since $|L(u)| = \ceil{\frac{M}2}+2$, and since $\ceil{\frac{d(v)}2}+\ceil{\frac{d(w)}2}<\ceil{\frac{M}2}+2$, we can extend the coloring to $u$.
So we assume instead that $c(v)=c(w)=1$.  

If $c(v)=c(w)$, then (similar to that above), $u$ can receive any color except for $c(v)$ and those colors that appear twice on $N(v)\cup N(w)$.  The number of forbidden colors is at most $1+ \floor{\frac{(d(v)-1)+(d(w)-1)}2}\le \ceil{\frac{d(v)}{2}}+\ceil{\frac{d(w)}{2}}$.  So, once again, we can extend the coloring to $u$.
\bigskip

{\bf Case (RC2):} Suppose that $G$ contains configuration (RC2). 
Let $u$ be a 3-vertex adjacent to a 2-vertex and to two other neighbors $v$ and $w$ with $d(v)+d(w)\le8$.
By the minimality of $G$, subgraph $G-u$ has a linear list coloring from $L$.  If all three neighbors of $u$ have the same color, then we won't get a linear coloring of $G$ no matter how we color $u$.  In this case, we can recolor the 2-vertex and still have a linear coloring of $G-u$.
Now we will extend the coloring to $u$.  

Let $L'(u)$ denote the colors in $L(u)$ that are still available for use on $u$.  When we color $u$, we clearly must avoid the colors on its three neighbors.  We also want to avoid creating a $2$-colored cycle or a vertex that has three neighbors with the same color.  To do this, it suffices to avoid any color that appears on two or more vertices at distance two from $u$.  This gives us an upper bound on the number of forbidden colors: $3+\floor{\frac{(d(v)-1)+(d(w)-1)+1}2}=3+ \floor{\frac{d(v)+d(w)-1}{2}}\le3+\floor{\frac72}=6$.  Since $|L(u)|\ge 7$, we have $|L'(u)|\ge 1$.  Thus, we can extend the coloring to $u$.
\bigskip

\begin{figure}
\begin{center}
\begin{tikzpicture}
[dot/.style={circle,fill=black,minimum size=10pt,inner sep=0pt, outer sep=-1pt},
scale=.90]
\draw (-.25,-2.0) node[label=0:{\hspace{-.8cm}(a): (RC3)}] {};

%\draw (.25,-3.7) node[label=0:{\hspace{-.8cm}(a): (RC3)}] {};
\filldraw
(0,0) node[dot,label=0:\!$u$]{} -- 
(-1,1) node[dot,label=0:\!$v_1$]{} (-1,1) -- 
(-1,2) node[dot,label=0:\!$w_1$]{} 
(0,0) node[dot,label=0:\!$u$]{} -- 
(0,1) node[dot,label=0:\!$v_2$]{} -- 
(0,2) node[dot,label=0:\!$w_2$]{} 
(0,0) node[dot,label=0:\!$u$]{} -- 
(1,1) node[dot,label=0:\!$v_3$]{};
\draw (1,1) -- (1.25,1.5) (1,1) -- (.75,1.5);
;

\begin{scope}[xshift=5cm, yshift=1cm]
\draw (-.65,-3.0) node[label=0:{\hspace{-.5cm}(b): (RC4)}] {};
\filldraw 
(180:2) node[dot,label=90:$w_1$]{} --
(180:1) node[dot,label=90:$v_1$]{} --
(180:0) node[dot,label=45:$u$]{} 
(90:2) node[dot,label=0:$w_2$]{} --
(90:1) node[dot,label=0:$v_2$]{} --
(90:0)
(0:2) node[dot,label=90:$w_3$]{} --
(0:1) node[dot,label=90:$v_3$]{} --
(0:0)
(270:2) node[dot,label=0:$w_4$]{} --
(270:1) node[dot,label=0:$v_4$]{} --
(0,0);
\begin{scope}[xshift=5.5cm]
\draw (-.35,-3.0) node[label=0:{\hspace{-.8cm}(c): (RC5)}] {};
\filldraw 
(72:2) node[dot,label=right:$w_1$]{} --
(72:1) node[dot,label=right:$v_1$]{} --
(72:0) node[dot,label=45:$u$]{} 
(144:2) node[dot,label=right:$w_2$]{} --
(144:1) node[dot,label=right:$v_2$]{} --
(144:0)
(216:2) node[dot,label=right:$w_3$]{} --
(216:1) node[dot,label=right:$v_3$]{} --
(216:0)
(288:2) node[dot,label=right:$w_4$]{} --
(288:1) node[dot,label=right:$v_4$]{} --
(0,0) (0,0) --
(0:1) node[dot,label=right:$v_5$]{};
\draw (0:1) -- (20:1.5) (0:1) -- (-20:1.5);
\end{scope}
\end{scope}
\end{tikzpicture}
\end{center}
\caption{
Configurations (RC3), (RC4), and (RC5) 
from Lemma~\ref{lemma3} and Theorem~\ref{thm2}.
}
\end{figure}
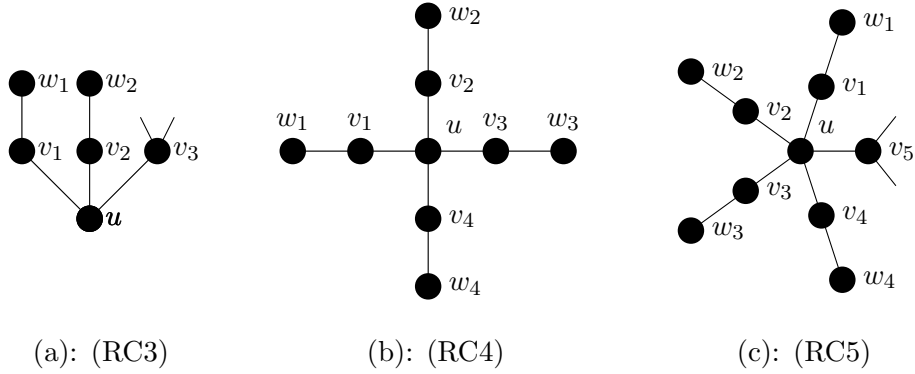
%\newpage

{\bf Case (RC3):}
Suppose that $G$ contains configuration (RC3), shown in Figure~1.
Let $u$ be a $3$-vertex that has neighbors $v_1$, $v_2$, and $v_3$ with $d(v_1)=d(v_2)=2$ and $d(v)=3$.   Let $N(v_i)=\{w_i, u\}$ for $i\in \{1, 2\}$.    By the minimality of $G$, subgraph $G-\{u, v_1, v_2\}$ has a linear list coloring $c$ from $L$.  For each uncolored vertex $z\in\{u,v_1,v_2\}$, let $L'(z)$ denote the colors in $L(z)$ that are still available for use on $z$.  Note that $|L'(z)|\ge 2$ for each uncolored vertex $z$.

Suppose that $L'(u)=\{c(w_1), c(w_2)\}$; this means that $c(v_3)\not\in\{c(w_1), c(w_2)\}$.  Color $u$ with $c(w_1)$.  Now choose $c(v_1)\in L'(v_1)-c(v_3)$ and $c(v_2)\in L'(v_2)-c(w_1)$.  This is a valid linear coloring of $G$. 

Suppose instead that $L'(u)\setminus\{c(w_1), c(w_2)\}\neq\emptyset$.  Choose $c(u)\in L'(u)-\{c(w_1), c(w_2)\}$, choose $c(v_1)\in L'(v_1)-c(u)$, and choose $c(v_2)\in L'(v_2)-c(u)$.   This coloring is proper and contains no $2$-alternating path through $u$.  Hence, it is a linear coloring unless $c(v_1)=c(v_2)=c(v_3)$.   If no other choice of $c(v_1)$ and $c(v_2)$ can avoid this problem, then we can conclude that $L'(v_1)=L'(v_2)=\{c(v_3), c_1\}$ (for some color $c_1$); further $L'(u)-\{c(w_1), c(w_2)\}=\{c_1\}$.  Suppose we are in this case.

If $c(w_1)\ne c(w_2)$, then, without loss of generality, $L'(u)=\{c(w_1),c_1\}$.
Now let $c(u)=c(w_1)$, $c(v_1)=c_1$, and $c(v_2)=c(v_3)$ This is a valid linear coloring.    
So, by relabeling, we may assume that $c(w_1)=c(w_2)=1$, $c(v_3)=2$, and $c_1=3$.  Thus $L'(v_1)=L'(v_2)=\{2, 3\}$ and $L'(u)=\{1, 3\}$. 

Note that $\{2, 3\}\subseteq L'(v_i)$ implies that 2 and 3 each appear at most once in $N(w_i)$ (for $i\in\{1,2\}$).   
If 3 does not appear on both $N(w_1)$ and $N(w_2)$, then let $c(v_1)=c(v_2)=3$ and $c(u)=1$.  
If 2 does not appear on both $N(w_1)$ and $N(w_2)$, then let 
$c(u)=1$, $c(v_1)=2$, $c(v_2)=3$ (or $c(u)=1$, $c(v_1)=3$, $c(v_2)=2$).    
So, we can assume that $2$ and $3$ each appear once on both $N(w_1)$ and $N(w_2)$.   However, now $|L'(v_i)|\ge (\lceil\frac{M}{2}\rceil+2)-(\lfloor\frac{M-3}{2}\rfloor+1)\ge 3$, which is a contradiction. 
\bigskip

{\bf Case (RC4):}
Suppose that $G$ contains configuration (RC4), shown in Figure~1.
Let $u$ be a 4-vertex and let $N(u)=\{v_i: 1\le i\le 4\mbox{ such that } d(v_i)=2\}$.  Also let $N(v_i)=\{u, w_i\}$ for $1\le i\le 4$.   By the minimality of $G$, subgraph $G-\{u,v_1,v_2,v_3,v_4\}$ has a linear list coloring from $L$.  
For each uncolored vertex $z$, let $L'(z)$ denote the list of colors still available for $z$.
Note that $|L'(v_i)|\ge 2$ and $|L'(u)|=|L(u)|=\ceil{\frac{M}{2}}+2\ge 7$, since $M\ge 9$. 

We can color the $v_i$'s from their lists so that every color is used on at most two $v_i$'s, as follows.  If some color $c$ is available for use on two or more $v_i$'s, then use $c$ on exactly two of them, and color each of the remaining $v_i$'s with another color (which could be the same for both of them).  Otherwise, all the $v_i$'s have disjoint lists of available colors, so color them arbitrarily.  

 If the four colors on the $v_i$'s are all distinct, then color $u$ with a fifth color.  
If $c(v_1)=c(v_2)$ but $c(v_1)$, $c(v_3)$, and $c(v_4)$ are all distinct, then choose $c(u)$ so that $c(u)\not\in\{c(v_1), c(v_3), c(v_4), c(w_1)\}$. 
Finally, if $c(v_1)=c(v_2)$ and $c(v_3)=c(v_4)$ (which together imply $c(v_1)\ne c(v_3)$), then choose $c(u)$ so that $c(u)\not\in \{c(v_1), c(v_3), c(w_1), c(w_3)\}$.  
\bigskip

{\bf Case (RC5):}
Suppose that $G$ contains configuration (RC5), shown in Figure~1.
Let $u$ be a 5-vertex and let $N(u)=\{v_i: 1\le i\le 5\}$, such that $d(v_i)=2$ for $1\le i\le 4$ and $d(v_5)\le 3$.  Also let $N(v_i)=\{u, w_i\}$ for $1\le i\le 4$, where $d(w_i)\le 8$.   By the minimality of $G$, subgraph $G- \{u,v_1,v_2,v_3,v_4\}$ has a linear coloring $c$ from $L$.  
For each uncolored vertex $z\in\{u,v_1,v_2,v_3,v_4\}$,
let $L'(z)$ denote the list of colors still available for $z$. 
Since $d(w_i)\le 8$, we have $|L'(v_i)|\ge 3$. 
Conversely, $|L'(u)|\ge \ceil{\frac{M}{2}}+2-(\floor{\frac22}+1)=\ceil{\frac{M}2}\ge 5$, since $M\ge 9$.
Now we let $L''(v_i)=L'(v_i)-c(v_5)$; note that $|L''(v_i)|\ge 2$.  We now extend the coloring by using the lists $L'(u)$ and $L''(v_i)$. 
We can completely ignore $v_5$ (since we deleted $c(v_5)$ from the lists), so the analysis is exactly the same as in Case (RC4).
\end{proof}

As we explained in the introduction, this theorem immediately yields the following corollary.

\begin{cor}
If graph $G$ is planar, has girth at least 6, and $\Delta(G)\ge 9$, then $\lcl(G)\le\ceil{\frac{\Delta(G)}2}+2$.
\end{cor}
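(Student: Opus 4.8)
The plan is to derive this directly from Theorem~\ref{thm2} by controlling the maximum average degree via the girth hypothesis. The only substantive ingredient is the standard relationship between girth and $\mad$ for planar graphs, which the introduction already records: every planar graph $G$ with girth $g$ satisfies $\mad(G)<2g/(g-2)$. I would treat this Euler's-formula bound as given, since it was invoked earlier in the excerpt.

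First I would observe that the function $g\mapsto 2g/(g-2)$ is strictly decreasing on $(2,\infty)$, so for any planar graph of girth at least $6$ we have $\mad(G)<2g/(g-2)\le 2\cdot 6/(6-2)=3$. Thus every planar $G$ with $g(G)\ge 6$ satisfies $\mad(G)<3$.

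Next I would set $M=\Delta(G)$. By hypothesis $\Delta(G)\ge 9$, so $M\ge 9$ and of course $\Delta(G)\le M$. Both hypotheses of Theorem~\ref{thm2} (namely $\mad(G)<3$ and $\Delta(G)\le M$ with $M\ge 9$) are now satisfied, so Theorem~\ref{thm2} applies and yields $\lcl(G)\le\ceil{M/2}+2=\ceil{\Delta(G)/2}+2$, as desired.

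There is essentially no obstacle here: the corollary is an immediate specialization of Theorem~\ref{thm2}, and the sole nonroutine step has already been supplied in the introduction. The one point worth verifying is that girth at least $6$ (rather than some larger threshold) is exactly what forces $\mad(G)<3$; this holds because $g=6$ is the smallest girth at which $2g/(g-2)\le 3$, so the relevant bound is tight precisely at $g=6$ and the hypothesis cannot be relaxed to girth $5$ through this argument.
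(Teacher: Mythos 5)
Your proof is correct and is exactly the paper's intended argument: the paper dismisses the corollary with ``this theorem immediately yields,'' relying on the introduction's Euler-formula bound $\mad(G)<2g/(g-2)$, which for $g\ge 6$ gives $\mad(G)<3$, after which Theorem~\ref{thm2} applies with $M=\Delta(G)\ge 9$. Your write-up simply makes these routine steps explicit, including the monotonicity of $2g/(g-2)$, so there is nothing to correct.
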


Although our proof of Theorem~\ref{thm2} relies heavily on the hypothesis $\Delta(G)\ge 9$, we suspect that the Theorem is true even when this hypothesis is removed.  Namely, we conjecture that every graph $G$ with $\mad(G)<3$ satisfies $\lcl(G)\le\ceil{\frac{\Delta(G)}2}+2$.  If true, this result is best possible, as shown by the graph $K_{3,3}$, since $\lcl(K_{3,3})=5$.  % (for more details, look at a similar example in the next paragraph).  
Furthermore, every graph $G$ with $K_{3,3}\subseteq G$, $\mad(G)=3$, and $\Delta(G)\in\{3,4\}$ shows this result is best possible.

\section{$\mad(G)<\frac{12}5$ implies $\lcl(G)=\ceil{\frac{\Delta(G)}2}+1$}
\label{sec4}

In this section, we prove that if $G$ is a graph with $\Delta(G)\ge 3$ and $\mad(G)<\frac{12}5$, then $\lcl(G)=\ceil{\frac{\Delta(G)}2}+1$.   For such graphs, we prove an upper bound that matches the trivial lower bound $\lcl(G)\ge\ceil{\frac{\Delta(G)}2}+1$.
Recall (from the introduction) that our bound on $\mad(G)$ is best possible, as  demonstrated by $K_{2,3}$, since $\mad(K_{2,3})=\frac{12}5$ and $\lcl(K_{2,3}) > \ceil{\frac{\Delta(K_{2,3})}2}+1$.
%Before we give our proof, we provide an example to show Theorem~\ref{thm3} is best possible, i.e., the ``$\frac{12}5$'' cannot be replaced by a larger number.  Consider the complete bipartite graph $K_{2,3}$.  Note that $\mad(K_{2,3})=\frac{12}5$ and $\Delta(K_{2,3})=3$.  We show that $\lc(K_{2,3})=4$, as follows.  Clearly, no three vertices can get the same color.  Similarly, we cannot use some color $a$ on two vertices in one part and another color $b$ on two vertices in the other part, (or we'll get a 2-colored 4-cycle).  Thus, $\lc(K_{2,3})=4$.  Furthermore, consider any graph $G$ such that $K_{2,3}\subseteq G$, $\mad(G)=\frac{12}5$, and $\Delta(G)\in\{3,4\}$.  Every such $G$ will have $\lc(G)\ge 4$; thus, we have an infinite family of graphs that proves Theorem~\ref{thm3} is best possible.

\begin{lemma}\label{12-5}
If $G$ is a graph with 
$\mad(G)<\frac{12}5$
and 
$\delta(G)\ge 2$, then $G$ contains one of the following four configurations:
\begin{enumerate}
%\item[(RC0)] a $1$-vertex;
\item[(RC1)] a $3^+$-thread, 
\item[(RC2)] a 3-vertex $v$ incident to two $1^+$-threads and one 2-thread,
such that the vertex at distance two from $v$ along each $1^+$-thread is a $3^-$-vertex,
%and  the other endpoints of a $1$-threads being a $3$-vertex when it is incident to two $1$-threads,
%\item[(RC3)] two adjacent $3$-vertices each incident to two $2$-threads;
\item[(RC3)] adjacent $3$-vertices with at least seven $2$-vertices in their incident threads,
\item[(RC4)] a path of three vertices $uwv$ with $d(u)=d(w)=d(v)=3$ such that $w$ is incident to a $2$-thread and $u$ and $v$ are each incident to two $2$-threads. 
\end{enumerate}
\end{lemma}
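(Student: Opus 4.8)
The plan is to prove this structural lemma by discharging, exactly as in Lemmas~\ref{girth5lemma} and~\ref{lemma3}. I would assign each vertex $v$ the initial charge $\mu(v)=d(v)-\frac{12}5$, so that the total charge is $2|E|-\frac{12}5|V|<0$; this is negative because $\mad(G)<\frac{12}5$ forces the average degree of $G$ itself to lie below $\frac{12}5$. Under the hypothesis $\delta(G)\ge 2$, the only vertices with negative charge are the $2$-vertices, each carrying $-\frac25$, while every $3$-vertex has surplus $\frac35$ and every $4^+$-vertex has surplus at least $\frac85$. The goal is then to design rules that move charge from the $3^+$-vertices to the $2$-vertices (with a secondary rule letting a surplus vertex relieve a deficient $3$-vertex), and to show that if $G$ avoids all four configurations, then every vertex ends nonnegative, contradicting the negativity of the total.

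The natural rules send charge from each $3^+$-vertex outward along its incident threads to the $2$-vertices on them, so that each $2$-vertex collects $\frac25$ (intuitively $\frac15$ from each side) and finishes at $0$. Ruling out (RC1) is what makes this feasible: it guarantees that every thread carries at most two internal $2$-vertices, so each $2$-vertex lies within distance two of a $3^+$-endpoint and the demand on any single thread is bounded. With these rules the $4^+$-vertices are routine, since a $k$-vertex is incident to at most $k$ threads, each making a bounded demand, and its surplus $k-\frac{12}5$ comfortably absorbs the total.

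I expect the whole difficulty to live in the $3$-vertices, whose surplus is only $\frac35$, and I expect the remaining three configurations to be exactly calibrated to them. A $3$-vertex all of whose edges feed into heavy threads is easily forced into deficit: being incident to two $2$-threads already costs more than $\frac35$, and being incident to three heavy threads is hopeless because such a vertex has no neighboring $3^+$-vertex from which to pull charge back. Concretely, I would case-split a deficient $3$-vertex by the multiset of its thread types and the degrees of the vertices two steps away. The case where all three edges feed heavy threads while the nearby would-be helpers are themselves low-degree should be precisely (RC2); the case of two adjacent $3$-vertices whose incident threads jointly carry at least seven $2$-vertices---so that their combined surplus $\frac65$ cannot feed all of them---should be precisely (RC3); and the case of a path $uwv$ of $3$-vertices in which $u$ and $v$ are each doubly overloaded while $w$'s single unit of surplus can rescue at most one of them should be precisely (RC4).

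The main obstacle is therefore twofold. First, one must choose the secondary discharging rule (surplus-vertex to deficient-$3$-vertex) so that it relieves a deficient neighbor by exactly the right amount without driving the donor negative; the correct split of who pays for each $2$-vertex, and whether a rich endpoint covers full freight, is the sensitive design choice. Second, one must verify by an exhaustive but elementary charge computation that \emph{every} surviving deficient picture around a $3$-vertex coincides with one of (RC2)--(RC4). The distance-two degree conditions in (RC2), the count of seven $2$-vertices in (RC3), and the precise path structure in (RC4) are the thresholds at which the $\frac35$ and $\frac65$ budgets are first exhausted, so the delicate part is confirming that no other deficient local configuration escapes this list.
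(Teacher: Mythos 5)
You adopt exactly the paper's framework---the charge $\mu(v)=d(v)-\frac{12}5$, the primary rule sending $\frac15$ to each $2$-vertex from each endpoint of its thread, and secondary rules by which surplus vertices relieve deficient $3$-vertices---so there is no divergence in approach; the problem is that your write-up stops where the actual proof starts. Everything in this lemma lives in the two items you explicitly defer: the precise form of the secondary rules (in the paper: a $3$-vertex incident to two $2$-threads receives $\frac15$ from its $3^+$-neighbor, and a $3$-vertex incident to a $1$-thread receives $\frac15$ from the far endpoint of that $1$-thread when that endpoint is a $4^+$-vertex), and the exhaustive check that a $3$-vertex avoiding (RC1)--(RC4) finishes nonnegative. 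Saying the four configurations ``should be precisely'' the surviving deficient cases is a restatement of the lemma, not an argument. Also, your treatment of $4^+$-vertices accounts only for payments to $2$-vertices and ignores the secondary payments; the paper closes this by observing that the total outflow along any single thread is at most $\frac25$ whether it goes to two $2$-vertices or is split between a $2$-vertex and a $3$-vertex. As written, nothing beyond the computation of initial charges has been established.

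Moreover, the deferred check is not a formality, because it is exactly where the difficulty (and the danger) lies. Run your plan on a $3$-vertex $v$ whose three threads are two $2$-threads together with one $1$-thread whose far endpoint is a $4^+$-vertex: no copy of (RC2) is present (the vertex at distance two along the $1$-thread has degree at least $4$), and no $3^+$-thread or adjacent $3$-vertices need exist, yet $v$ pays $5\cdot\frac15=1$ and can be reimbursed only $\frac15$, finishing at $-\frac15$. This local picture can even be realized globally while avoiding all four configurations and staying sparse: take a $4$-vertex $w$, join four $3$-vertices $v_1,\dots,v_4$ to $w$ by $1$-threads, and join the $v_i$ in a cycle by four $2$-threads; the resulting graph has $\delta(G)=2$ and $\mad(G)=\frac{40}{17}<\frac{12}5$ but contains none of (RC1)--(RC4). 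So a deficient $3$-vertex can survive the case analysis you sketch (this thread profile is in fact also missing from the dichotomy in the paper's own proof), and any correct argument must either strengthen the discharging rules or enlarge the configuration list to handle it. This is precisely the kind of failure that a plan postponing the ``exhaustive but elementary charge computation'' cannot detect, which is why the omission is a genuine gap and not a routine verification left to the reader.
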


\begin{proof}
%Suppose that  a graph $G$ with $\mad(G)<\frac{12}5$ contains none of the configurations.   We will show a contradiction by way of discharging method. 
We use discharging, with initial charge $\mu(v)=d(v)-\frac{12}5$ for each vertex $v$.  Since $\mad(G)<\frac{12}5$, the sum of the initial charges is negative.  We use the following three discharging rules:

\begin{itemize}
\item[(R1)] Every $2$-vertex gets charge $\frac15$ from each of the endpoints of its thread.
\item[(R2)] Every $3$-vertex incident to two $2$-threads gets charge $\frac15$ from its $3^+$-neighbor.
\item[(R3)] Every $3$-vertex incident to a $1$-thread gets charge $\frac15$ from the other endpoint of the $1$-thread if it is a $4^+$-vertex.
\end{itemize}

Now we will show that if $G$ contains none of configurations (RC1)--(RC4), then every vertex finishes with nonnegative charge, which is a contradiction.   
%By design, every $2$-vertex receives charge $\frac25$ so its final charge is $2-12/5+2/5=0$.  
If $d(v)=2$, then $\mu^*(v)=d(v)-\frac{12}5+2(\frac15)=0$.
%For a $4^+$-vertex $v$ with degree $d$,  
If $d(v)\ge 4$, then, 
since $G$ contains no $3^+$-threads (by (RC1)),  $v$ gives away charge $\frac15$ to each of at most $2d(v)$ 2-vertices.  
Note further that if $v$ gives away charge $\frac15$ to $t$ 3-vertices via (R2) and/or (R3), for some constant $t$, then $v$ gives away charge $\frac15$ to at most $2d(v)-t$ 2-vertices.
Thus, we have $\mu^*(v)\ge d(v)-\frac{12}5-\frac15(2d(v))=\frac35(d(v)-4)\ge 0$.   So we only need to consider $3$-vertices.  

Let $d(v)=3$.  Suppose $v$ has at most three $2$-vertices in its incident threads.  If $v$ does not give away charge by (R2), then $v$ gives away charge at most $3(\frac15)$, so $\mu^*(v)\ge 3-\frac{12}5 - 3(\frac15)=0$.
%if $v$ is not adjacent to $3$-vertices which is incident to two $2$-threads;   
If $v$ does give charge by (R2), then, since $G$ contains no copy of (RC3), $v$ has at most two $2$-vertices in its incident threads. 
Thus $v$ gives away charge at most $3(\frac15)$, unless both $v$ is incident to a 2-thread and also $v$ gives away charge by (R2) to two distinct vertices.
However, this situation cannot occur, since it implies that $G$ contains a copy of (RC4), which is a contradiction.

Suppose instead that $v$ has at least four $2$-vertices in its incident threads. Since $G$ contains no copy of (RC2), either $v$ is incident to two $2$-threads and also adjacent to a $3^+$-vertex, or $v$ is incident to two $1$-threads and one $2$-thread and the other end of at least one $1$-thread is a $4^+$-vertex.  In each case, $v$ gives away charge $4(\frac15)$ and receives charge at least $\frac15$, so $\mu^*(v)\ge 3-\frac{12}5-4(\frac15)+\frac15=0$.  
%In the second case,  $v$ gives out $\frac45$ and receives $\frac25$, and its final charge is positive. 
\end{proof} 

%In fact, the bound on $\mad(G)$ in Lemma~\ref{12-5} is sharp, as illustrated by the following example.  Begin with an octahedron.  Form $G$ by replacing each edge of the octahedron with a path of length three.  Now $G$ has six 4-vertices and twenty-four 2-vertices, so the average degree of $G$ is $\frac{6\times4+24\times2}{6+24}=\frac{12}5$.  It is straightforward to verify that $\mad(G)=\frac{12}5$.  However, $G$ clearly has no 3-vertex, and no $3^+$-thread (and, thus, no copy of any of configurations (RC1)--(RC4)).

Now we use Lemma~\ref{12-5} to prove the following linear list coloring result.

%\newpage
\begin{theorem}
Let $M\ge 3$ be an integer.  If $G$ is a graph with $\mad(G)<\frac{12}5$ and $\Delta(G)\le M$, then $\lcl(G)=\ceil{\frac{M}2}+1$.
\label{thm3}
\end{theorem}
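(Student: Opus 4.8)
\textbf{Proof proposal for Theorem~\ref{thm3}.}

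The plan is to follow the same discharging-plus-reducibility outline used in the previous two sections, relying now on Lemma~\ref{12-5}. First I would set up a minimal counterexample: suppose the theorem fails, let $G$ be a minimal counterexample, and let $L$ be a list assignment of size $\ceil{M/2}+1$ admitting no linear list coloring. As in Theorem~\ref{thm2}, I would argue $G$ is connected and then dispose of $1$-vertices: if $u$ is a $1$-vertex with neighbor $v$, minimality gives a linear coloring of $G-u$, and the forbidden colors for $u$ number at most $\floor{(M-1)/2}+1 = \ceil{M/2}$, leaving at least one usable color. Hence $\delta(G)\ge 2$, and Lemma~\ref{12-5} applies: $G$ must contain one of the four configurations (RC1)--(RC4). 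The bulk of the proof is then to show each configuration is reducible, i.e.\ that a linear list coloring of the appropriate deleted subgraph always extends.

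For each configuration I would delete a carefully chosen set of low-degree vertices (the internal $2$-vertices of threads, and in (RC2)--(RC4) also some of the incident $3$-vertices), obtain a linear coloring of the smaller graph by minimality, and then count available colors at each uncolored vertex. The key counting principle, exactly as in the earlier proofs, is that to extend the coloring to a vertex $z$ it suffices to forbid the colors already on its neighbors together with any color appearing on two or more vertices at distance two from $z$; this simultaneously prevents a monochromatic triple at a neighbor and prevents closing a bichromatic cycle through $z$. The crucial arithmetic feature making $+1$ colors suffice is that the deleted vertices are $2$-vertices, so each has very small degree and hence very few distance-two constraints, while the tight budget $\ceil{M/2}+1$ forces the analysis to track when several uncolored vertices compete for the same scarce colors. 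For (RC1), a $3^+$-thread, I expect a clean greedy argument: the internal $2$-vertices form a path, each has only two neighbors and few second-neighbors, so coloring them in order (and recoloring endpoints if a bichromatic path threatens to close) succeeds. The harder configurations are (RC2), (RC3), and (RC4), where a branching vertex of degree $3$ is incident to several threads and the deleted vertices interact.

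The main obstacle I anticipate is precisely the degenerate case where the naive count gives only one available color at each uncolored vertex and those singleton lists coincide or conflict, analogous to the subtle subcase in Case (RC3) of Theorem~\ref{thm2} where one is forced into $L'(v_1)=L'(v_2)=\{c(v_3),c_1\}$. To handle these ties I would refine the count: when two potential neighbors of an uncolored vertex already share a color, they jointly forbid only one color rather than two, gaining an extra available color; and when a would-be obstruction (a monochromatic triple or a closing bichromatic path) actually arises, I would recolor one of the flexible $2$-vertices along the relevant thread, using that $2$-vertices always retain at least two usable colors. The delicate bookkeeping is ensuring that across the whole configuration---especially in (RC4), with three mutually adjacent $3$-vertices each carrying two $2$-threads---these recoloring moves can be made compatibly, so that no two repairs interfere. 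I would organize each case by first establishing the generic extension via the distance-two count, then enumerating the finitely many tight configurations of coincident singleton lists and resolving each by an explicit recoloring, mirroring the structure of the (RC3) analysis already carried out in Theorem~\ref{thm2}.
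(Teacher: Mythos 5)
Your skeleton coincides exactly with the paper's (minimal counterexample, the same count to eliminate $1$-vertices, Lemma~\ref{12-5}, then reducibility of (RC1)--(RC4)), but the proposal stops where the actual proof begins: the entire content of the theorem is the verification that each configuration is reducible with the tight budget $\ceil{\frac{M}2}+1$, and for (RC2)--(RC4) you only promise a method (``establish the generic extension via the distance-two count, then enumerate the tight cases and resolve each by an explicit recoloring'') without executing it. This is a genuine gap, not a matter of routine detail, because the paper's case analyses are not instances of any generic count-plus-repair scheme. For example, in (RC3) the decisive step is choosing $c(u)\in L'(u)\setminus L'(u_1)$ (possible since $|L'(u)|\ge 3$ while $|L'(u_1)|= 2$), which preempts any later conflict at $u_1$; in (RC4) one must delete eight vertices, split into subcases according to whether $c(u_1')=c(u_2')$, and in the degenerate subcase set $c(u_1)=c(u_2)=c(u)$ and then \emph{re-color} $u$. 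None of these devices, nor substitutes for them, appear in your proposal, and (RC4) is moreover a path $uwv$, not ``three mutually adjacent $3$-vertices.''

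Worse, the one mechanism you do commit to is arithmetically insufficient. Already in (RC1), your plan (delete all internal $2$-vertices of the thread and color them in order) can fail at the last vertex: in a $3$-thread $u,u_1,u_2,u_3,u_4$ with $u_4$ the far endpoint of degree up to $M$, the vertex $u_3$ must avoid $c(u_2)$, $c(u_4)$, and up to $\floor{\frac{M-1}2}=\ceil{\frac M2}-1$ colors appearing twice in $N(u_4)$, i.e.\ up to $\ceil{\frac M2}+1$ colors in all---the entire list. The failure mode is list exhaustion from the frugality constraint at $u_4$, not a ``bichromatic path threatening to close,'' so the repair you name does not address it. The paper sidesteps this by deleting \emph{only} the middle vertex $u_2$, so both of its neighbors are colored $2$-vertices: then at most two colors are forbidden for properness and at most one more (namely $c(u)$, when $c(u_1)=c(u_3)$) to prevent a $2$-colored cycle. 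The same exact tightness is what makes (RC2)--(RC4) delicate, and closing those cases requires the specific deletion sets, coloring orders, and recoloring moves that your proposal leaves unspecified.
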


\begin{proof}
Suppose the theorem is false.  Let $G$ be a minimal counterexample and let list assignment $L$, of size $\ceil{\frac{M}2}+1$, be such that $G$ has no linear list coloring from $L$.
Since $M\ge 3$, we have $|L(v)|=\ceil{\frac{M}2}+1\ge3$ for all $v\in V$.
Note that $G$ must be connected.   
Suppose that $G$ contains a $1$-vertex $u$ with neighbor $v$.  By the minimality of $G$, subgraph $G-\{u\}$ has a linear list coloring from $L$.  Let $L'(u)$ denote the list of colors in $L(u)$ that neither appear on $v$ nor appear twice in $N(v)$.
Note that $|L'(u)|\ge (\ceil{\frac{M}2}+1)-\floor{\frac{M-1}2}-1\ge 1$.   Thus, if $G$ has a 1-vertex $u$, we can extend a linear list coloring of $G-u$ to $G$.  So we may assume that $\delta(G)\ge 2$.

Since $G$ has $\delta(G)\ge 2$ and $\mad(G)<\frac{12}5$, $G$ contains one of the four configurations specified in Lemma~\ref{12-5}.  We consider each of these four configurations in turn, and in each case we construct a linear coloring of $G$ from $L$.
\bigskip
 
{\bf Case (RC1):} Suppose that $G$ contains (RC1): a $3^+$-thread.  Let $u, u_1, u_2, u_3, u_4$ be part of the thread, that is, $d(u)\ge 3$, $d(u_1)=d(u_2)=d(u_3)=2$, and $d(u_4)\ge 2$.  By the minimality of $G$, subgraph $G-\{u_2\}$ has a linear coloring from $L$.   If $c(u_1)=c(u_3)$, then $|L(u_2)|\ge 2$, so we choose $c(u_2)\in L(u_2)-\{c(u)\}$.  If $c(u_1)\not=c(u_3)$, then $|L(u_2)|\ge 1$, so we choose $c(u_2)\in L(u_2)$.  Note that either $c(u_2)\not=c(u)$ or $c(u_1)\not=c(u_3)$, so we haven't created a $2$-colored cycle. 
\bigskip

\begin{figure}
\begin{center}
\begin{tikzpicture}
[dot/.style={circle,fill=black,minimum size=10pt,inner sep=0pt, outer sep=-1pt},
scale=.65]
\draw (-.5,-3.7) node[label=0:{\hspace{-.8cm}(a): (RC2)}] {};
\begin{scope}[yshift=-1cm]
\draw
(0,3) 
node[dot,label=left:$u_1''$] {}-- 
(0,2)
node[dot,label=left:$u_1'$] {}-- 
(0,1)
node[dot,label=left:$u_1$] {}-- 
(0,0) 
(0,0)
node[dot,label=left:$u$] {}-- 
(-45:1)
node[dot,label=right:$u_3$] {}-- 
(-45:2)
node[dot,label=right:$u_3'$] {}
(0,0) --
(-135:1)
node[dot,label=left:$u_2$] {}-- 
(-135:2)
node[dot,label=left:$u_2'$] {};
\end{scope}

\begin{scope}[xshift=6cm]
\draw (.25,-3.7) node[label=0:{\hspace{-.8cm}(b): (RC3)}] {};
\filldraw 
%(120:3.5) -- 
(120:3) 
node[dot,label=left:$u_1''$] {}-- 
(120:2) 
node[dot,label=left:$u_1'$] {}-- 
(120:1) 
node[dot,label=left:$u_1$] {}-- 
(0,0) 
node[dot,label=left:$u$] {}-- 
%(240:3.5) -- 
(240:3) 
node[dot,label=left:$u_2''$] {}-- 
(240:2) 
node[dot,label=left:$u_2'$] {}-- 
(240:1) 
node[dot,label=left:$u_2$] {}-- 
(0,0);
%node[dot] {};
\filldraw (0,0) -- (1.25,0) 
node[dot] {};
\begin{scope}[xshift=1.25cm]
\filldraw 
%(60:3.5) -- 
(60:3) node[dot,label=right:$v_1''$] {}-- 
(60:2) node[dot,label=right:$v_1'$] {}-- 
(60:1) node[dot,label=right:$v_1$] {}-- 
(0,0) node[dot,label=right:$v$] {}-- 
%(300:2.5)  -- 
(300:2) node[dot,label=right:$v_2'$] {}-- 
(300:1) node[dot,label=right:$v_2$] {}-- 
(0,0) node[dot] {};
\end{scope}

\begin{scope}[xshift=7cm]
\draw (.8,-3.7) node[label=0:{\hspace{-.8cm}(c): (RC4)}] {};
\filldraw 
%(120:3.5) -- 
(120:3) 
node[dot,label=left:$u_1''$] {}-- 
(120:2) 
node[dot,label=left:$u_1'$] {}-- 
(120:1) 
node[dot,label=left:$u_1$] {}-- 
(0,0) 
node[dot,label=left:$u$] {}-- 
%(240:3.5) -- 
(240:3) 
node[dot,label=left:$u_2''$] {}-- 
(240:2) 
node[dot,label=left:$u_2'$] {}-- 
(240:1) 
node[dot,label=left:$u_2$] {}-- 
(0,0) node[dot] {};
\filldraw (0,0) -- (2.5,0) node[dot] {};
\begin{scope}[xshift=1.25cm]
\filldraw 
(90:2.598) node[dot,label=right:$w_1''$] {}-- 
(90:1.732) node[dot,label=right:$w_1'$] {}-- 
(90:.866) node[dot,label=right:$w_1$] {}-- 
(0,0) node[dot,label=30:$w$] {}; 
\end{scope}
\begin{scope}[xshift=2.5cm]
\filldraw 
(60:3) node[dot,label=right:$v_1''$] {}-- 
(60:2) node[dot,label=right:$v_1'$] {}-- 
(60:1) node[dot,label=right:$v_1$] {}-- 
(0,0) node[dot,label=right:$v$] {}-- 
(300:3) node[dot,label=right:$v_2''$] {}-- 
(300:2) node[dot,label=right:$v_2'$] {}-- 
(300:1) node[dot,label=right:$v_2$] {}-- 
(0,0) node[dot] {};
\end{scope}
\end{scope}
\end{scope}
\end{tikzpicture}

\end{center}
\caption{
Configurations (RC2), (RC3),
%, on the left, 
and (RC4) %, on the right, 
from Lemma~\ref{12-5} and Theorem~\ref{thm3}.
}
\end{figure}
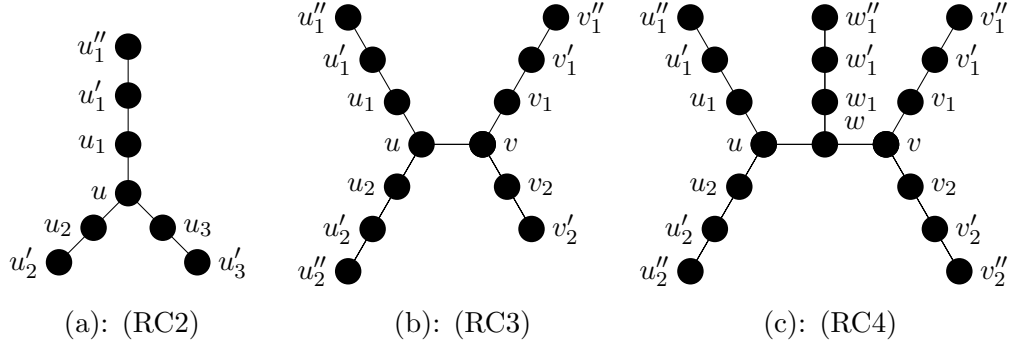

{\bf Case (RC2):}  Suppose instead that $G$ contains (RC2), shown in Figure~2. Let $u$ be a $3$-vertex that is incident to one $2$-thread $u, u_1, u_1', u_1''$ with $d(u_1'')\ge 3$ and incident to two $1^+$-threads $u, u_2, u_2'$ and $u, u_3, u_3'$ 
with $2\le d(u_2')\le 3$ and $2\le d(u_3')\le 3$.
%$d(u_2'), d(u_3')\ge 2$, and when $d(u_2'), d(u_3')\ge 3$, $d(u_2')=d(u_3')=3$.  
By the minimality of $G$, subgraph $G-\{u, u_1, u_2, u_3\}$ has a linear coloring from $L$. Now we will extend the coloring to $G$.  

For each uncolored vertex $z\in \{u, u_1, u_2, u_3\}$,  let $L'(z)$ denote the colors in $L(z)$ that are still available for use on $z$.   When we extend the coloring, we obviously must get a proper coloring.  In addition, we must avoid creating $2$-colored cycles and avoid creating vertices with $3$ neighbors of the same color.  
Note that $|L'(u_1)|\ge 2$, $|L'(u_2)|\ge 1$, and $|L'(u_3)|\ge 1$.  

%Let $c(u_1')\in L(u_1')$.   
Suppose $|L'(u_2)\cup L'(u_3)|\ge 2$.  We choose $c(u_2)\in L'(u_2)$ and $c(u_3)\in L'(u_3)$ such that $c(u_2)\not=c(u_3)$.  Next we choose $c(u)\in L'(u)-\{c(u_2), c(u_3)\}$.  
If $c(u)\not=c(u_1')$, then we choose $c(u_1)\in L'(u_1)-\{c(u)\}$. 
%, c(u_1')\}$. 
If instead $c(u)=c(u_1')$, then we choose $c(u_1)\in L'(u_1)-\{c(u_1'')\}$.  
This gives a valid linear coloring.

Suppose instead that $|L'(u_2)\cup L'(u_3)|=1$.
Thus $L'(u_2)=L'(u_3)=\{a\}$, for some color $a$.  Clearly, we must choose $c(u_2)=c(u_3)=a$.
%and the available color for both $u_2$ and $u_3$ is $a$.  
Note that this happens only if both $d(u_2')=d(u_3')=3$ and the two other neighbors of $u_2'$ (and $u_3'$) have the same color.  %Let $c(v_2')=b$.   
Now we choose $c(u_1)\in L(u_1)-\{a, c(u_1')\}$ and $c(u)\in L(u)-\{a\}$. %, c(u_1)\}$.  %Color $w$ with $c_w$ if $w$ is colored yet.    

Since $c(u_1)\not=a$, we haven't created any vertex with 3 neighbors of the same color, and we haven't created any 2-colored cycle passing through $u_1$.  Since $c(u_2)$ does not appear on the other neighbors of $u_2'$,  we haven't created any $2$-colored cycle passing through $u_2$.    
\bigskip 

{\bf Case (RC3):}  Now suppose instead that $G$ contains (RC3): 
two adjacent $3$-vertices with at least seven $2$-vertices in their incident threads (shown in Figure~2). 
We label the vertices as follows:  let $u$ and $v$ be the adjacent $3$-vertices,  $u$ is incident to two $2$-threads $u, u_1, u_1', u_1''$ and $u, u_2, u_2', u_2''$ and $v$ is incident to one $2$-thread $v, v_1, v_1', v_1''$ and one $1^+$-thread $v, v_2, v_2'$.  %Note that vertices $u_1''$, $u_2''$, and $v_1''$ are $3^+$-vertices and $v_2'$ is a $2^+$-vertex.  

By the minimality of $G$,  subgraph $G-\{u, v, u_1, u_2, v_1\}$ has a linear coloring from $L$.  Now we will extend the coloring to $G$.   For each vertex $z\in \{u, v, u_1, u_2, v_1 \}$,  let $L'(z)$ denote the colors in $L(z)$ that are still available for use on $z$.   When we extend the coloring, we obviously must get a proper coloring.  In addition, we must avoid creating $2$-colored cycles and avoid creating vertices with $3$ neighbors of the same color.  Note that $|L'(u_1)|\ge 2$, $|L'(u_2)|\ge 2$, $|L'(v_1)|\ge 2$, $|L'(u)|\ge 3$, and $|L'(v)|\ge 3$; we may assume that equality holds in each case.

Since $|L'(u)|=3>2=|L'(u_1)|$, we can choose $c(u)\in L'(u)-L'(u_1)$.  If $c(u)=c(v_2)$, then choose $c(v_1)\in L'(v_1)-\{c(u)\}$ and $c(v)\in L'(v)-\{c(v_1)\}$. 
%$,c(u)\}$.  
If instead $c(u)\ne c(v_2)$, then choose $c(v)\in L'(v)-\{c(u)\}$. %,c(v_2)\}$.  

Now if $c(v)\ne c(v_1')$, then choose $c(v_1')\in L'(v_1)-\{c(v)\}$; if $c(v)=c(v_1')$, then choose $c(v_1')\in L'(v_1)-\{c(v_1'')\}$.  Next, choose $c(u_1)\in L'(u_1)-\{c(v)\}$.  Finally, if $c(u)=c(u_2')$, then choose $c(u_2)\in L'(u_2)-\{c(u_2'')\}$; otherwise, choose $c(u_2)\in L'(u_2)-\{c(u)\}$.

Recall that $c(u_1)\ne c(v)$ and either $c(u)\ne c(v_2)$ or $c(v_1)\ne c(v_2)$; thus, we don't create any vertices with three neighbors of the same color.  By construction, we have no 2-colored cycles through $u_2$ or $v_1$.  Further, $c(u_1)\ne c(v)$, so we don't create any 2-colored cycles.
\bigskip 

{\bf Case (RC4):}  Suppose that $G$ contains (RC4).  We label the vertices as follows: let $u, w, v$ be the path; let $u, u_1, u_1', u_1''$ and $u, u_2, u_2', u_2''$ be the $2$-threads incident to $u$;  let $v, v_1, v_1', v_1''$ and $v, v_2, v_2', v_2''$ be the $2$-threads incident to $v$; and let $w, w_1, w_1', w_1''$ be the $2$-thread incident to $w$.  
%By definition, $d(x), d(x'), d(y), d(y'), d(z)\ge 3$.  

By the minimality of $G$,  subgraph $G-\{u, u_1, u_2, v, v_1, v_2, w, w_1\}$ has a linear coloring from $L$.  Now we will extend the coloring to $G$.   For each vertex $z\in \{u, u_1, u_2, v, v_1, v_2, w, w_1\},$  let $L'(z)$ denote the colors in $L(z)$ that are still available for use on $z$.   When we extend the coloring, we obviously must get a proper coloring.  In addition, we must avoid creating $2$-colored cycles and avoid creating vertices with $3$ neighbors of the same color.  We will show explicitly how to color $u$, $u_1$, $u_2$, $w$, and $w_1$ (and we will color $v$, $v_1$, and $v_2$, analogously).
We consider two subcases.  
In fact, we may have one ``side'' ($u$, $u_1$, $u_1'$, $u_2$, and $u_2'$) that is in Subcase (i) and the other side that is in Subcase (ii); this is not a problem, since we color the sides independently.
%Subcase (i) is the ``easy'' case, so we handle it first.  

Subcase (i): 
Suppose that $c(u_1')=c(u_2')$.  If $c(u_1')\not\in L'(u)$, then we can choose $c(u_1)\in L'(u_1)$ and $c(u_2)\in L'(u_2)$ such that $c(u_1)\ne c(u_2)$, and afterward we choose $c(u)\in L'(u)-\{c(u_1'), c(u_1), c(u_2)\}$.  %Now we color $w$ and $w_1$ as above.  
If $c(u_1')\in L'(u)$, then let $c(u)=c(u_1')$. 
Choose $c(v)$ analogously.
In this instance, we wait to choose $c(u_1)$ and $c(u_2)$ until after we choose $c(w)$.

If $c(u)=c(v)$, then choose $c(w_1)\in L'(w_1)-\{c(u)\}$ and $c(w)\in L'(w)-\{c(w_1), c(u)\}$.  
If $c(u)\ne c(v)$, then choose $c(w)\in L'(w)-\{c(u),c(v)\}$ and $c(w_1)\in L'(w_1)-\{c(w)\}$.
Finally, choose $c(u_1)\in L'(u_1)-\{c(u), c(u_1'')\}$ and $c(u_2)\in L'(u_2)-\{c(u), c(w)\}$ (if we haven't chosen these colors yet; recall that $c(u)=c(u_1')$, so $c(u_1)\ne c(u)$; analogously, $c(u_2)\ne c(w)$).  

Subcase (ii): $c(u_1')\ne c(u_2')$.
Choose $c(u)\in L'(u)-\{c(u_1'), c(u_2')\}$.  Choose $c(v)$ analagously.    
Now color $w$ and $w_1$ as above.
%Color $u$ and $v$ with $c_u$ and $c_v$, respectively.  
Finally, we will color $u_1$, $u_2$, $v_1$, and $v_2$, as below.   

%
%Instead, suppose that $c(u_1')\not=c(u_2')$. 
If we can, we choose $c(u_1)\in L'(u_1)-\{c(u)\}$, and $c(u_2)\in L'(u_2)-\{c(u)\}$ such that either $c(u_1)\ne c(w)$ or $c(u_2)\ne c(w)$.  % is different from $c_w$, then color $u_1, u_2$ with $c_{u_1}, c_{u_2}$, and it is a valid linear coloring.  
If this is impossible, then $L'(u_1)=L'(u_2)=\{c(u), c(w)\}$; furthermore, $L(u)=\{c(u), c(u_1'), c(u_2')\}$.  Now let $c(u_1)= c(u_2)=c(u)$ and {\it re-color} $u$ with a new color in $L'(u)-\{c(u_1), c(w_1), c(w)\}$ (note that $c(w)\not\in L'(u)$).    Finally, color $v_1$, $v_2$, and $v$ analogously. 

It is clear that we have created a proper coloring.
It is also straightforward to verify that we didn't create any vertices with $3$ neighbors of the same color, and we didn't create any $2$-colored cycles.  
\end{proof}
This theorem immediately yields the following corollary.

\begin{cor}
If graph $G$ is planar with girth at least $12$ and $\Delta(G)\ge 3$, then $\lcl(G)=\ceil{\frac{\Delta(G)}2}+1$.
\end{cor}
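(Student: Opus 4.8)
The plan is to deduce the corollary directly from Theorem~\ref{thm3}, using nothing more than the standard translation between girth and maximum average degree for planar graphs that was recorded in the introduction. The only real content is to verify that the girth hypothesis ``$g(G)\ge 12$'' is exactly strong enough to force $\mad(G)<\frac{12}5$, after which Theorem~\ref{thm3} applies verbatim.

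First I would recall (or, if one wants a self-contained argument, rederive via Euler's formula) that every planar graph $H$ with girth $g(H)\ge g$ satisfies $\frac{2|E(H)|}{|V(H)|}<\frac{2g}{g-2}$: if $H$ is acyclic its average degree is below $2$, and otherwise counting edge--face incidences gives $2|E(H)|\ge g(H)|F(H)|$, which combined with $|V(H)|-|E(H)|+|F(H)|\ge2$ yields the stated bound (using that $\frac{2x}{x-2}$ is decreasing in $x$, so the worst case is $g(H)=g$). Since any subgraph of a planar graph of girth at least $12$ is again planar with girth at least $12$, this bound holds for all subgraphs simultaneously, and hence $\mad(G)<\frac{2\cdot 12}{12-2}=\frac{24}{10}=\frac{12}5$.

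Next I would set $M=\Delta(G)$. The hypothesis $\Delta(G)\ge 3$ gives $M\ge 3$, and the previous step gives $\mad(G)<\frac{12}5$, so both hypotheses of Theorem~\ref{thm3} are met with $\Delta(G)\le M$. Applying Theorem~\ref{thm3} then yields $\lcl(G)=\ceil{\frac{M}2}+1=\ceil{\frac{\Delta(G)}2}+1$, which is the desired conclusion.

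There is no genuine obstacle here: the argument is a one-line reduction to Theorem~\ref{thm3}. The only point demanding any care is confirming that girth $12$ is the precise threshold, i.e.\ that solving $\frac{2g}{g-2}\le\frac{12}5$ gives exactly $g\ge 12$, and that the monotonicity of $\frac{2g}{g-2}$ ensures the inequality persists (indeed strengthens) for all girths larger than $12$. Everything else follows immediately from the theorem just proved.
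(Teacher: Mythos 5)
Your proof is correct and is exactly the paper's (implicit) argument: the paper derives the corollary from Theorem~3 via the standard Euler-formula fact, stated in the introduction, that planar graphs of girth $g$ have $\mad(G)<\frac{2g}{g-2}$, which at $g=12$ gives $\mad(G)<\frac{12}{5}$. Your verification that girth $12$ is the exact threshold and your handling of subgraphs (including forests) just makes explicit what the paper leaves as ``immediately yields.''
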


\end{document}